\def\ZZ{{\NZQ Z}}
\def\frk{\frak}               
\def\Phi{{\frk n}}
\def\Phi{{\frk N}}
\def\opn#1#2{\def#1{\operatorname{#2}}} 
\opn\chara{char} \opn\length{\ell} \opn\pd{pd} \opn\rk{rk}
\opn\projdim{proj\,dim} \opn\injdim{inj\,dim} \opn\rank{rank}
\opn\spn{span}\opn\Seg{Seg}
\opn\depth{depth} \opn\grade{grade} \opn\height{height}
\opn\embdim{emb\,dim} \opn\codim{codim}
\opn\Tr{Tr} \opn\bigrank{big\,rank}
\opn\superheight{superheight}\opn\lcm{lcm}
\opn\trdeg{tr\,deg}
\opn\reg{reg} \opn\lreg{lreg} \opn\ini{in} \opn\lpd{lpd}
\opn\size{size}\opn\bigsize{bigsize}
\opn\cosize{cosize}\opn\bigcosize{bigcosize}
\opn\sdepth{sdepth}\opn\sreg{sreg}
\opn\link{link}\opn\fdepth{fdepth} \opn\trdeg{trdeg} \opn\mod{mod}
\opn\spann{span}
\opn\div{div} \opn\Div{Div} \opn\cl{cl} \opn\Cl{Cl}
\opn\Spec{Spec} \opn\Supp{Supp} \opn\supp{supp} \opn\Sing{Sing}
\opn\Ass{Ass} \opn\Min{Min}\opn\Mon{Mon} \opn\dstab{dstab} \opn\astab{astab}
\opn\Syz{Syz}
\opn\Ann{Ann} \opn\Rad{Rad} \opn\Soc{Soc} \opn\Aut{Aut}
\opn\Im{Im} \opn\Ker{Ker} \opn\Coker{Coker} \opn\Am{Am}
\opn\Hom{Hom} \opn\Tor{Tor} \opn\Ext{Ext} \opn\End{End}
\opn\Aut{Aut} \opn\id{id}
\opn\nat{nat}
\opn\pff{pf}
\opn\Pf{Pf} \opn\GL{GL} \opn\SL{SL} \opn\mod{mod} \opn\ord{ord}
\opn\Gin{Gin} \opn\Hilb{Hilb}\opn\sort{sort}
\opn\S{S} \opn\dim{dim} \opn\supp{supp}\opn\trdeg{trdeg}\opn\sort{sort}
\opn\aff{aff} \opn\con{conv} \opn\relint{relint} \opn\st{st}
\opn\lk{lk} \opn\cn{cn} \opn\core{core} \opn\vol{vol}
\opn\link{link} \opn\star{star}\opn\lex{lex}
\opn\conv{conv} \opn\Ehr{Ehr}\opn\Pic{Pic}
\opn\Conv{Conv}
\opn\gr{gr}
\def\pot#1#2{#1[\kern-0.28ex[#2]\kern-0.28ex]}
\opn\dirlim{\underrightarrow{\lim}}
\opn\inivlim{\underleftarrow{\lim}}
\let\tensor=\otimes
\def\Implies{\ifmmode\Longrightarrow \else
        \unskip${}\Longrightarrow{}$\ignorespaces\fi}
\def\implies{\ifmmode\Rightarrow \else
        \unskip${}\Rightarrow{}$\ignorespaces\fi}
\def\iff{\ifmmode\Longleftrightarrow \else
        \unskip${}\Longleftrightarrow{}$\ignorespaces\fi}
\newtheorem{Theorem}{Theorem}[section]
 \newtheorem{Lemma}[Theorem]{Lemma}
 \newtheorem{Corollary}[Theorem]{Corollary}
 \newtheorem{Remark}[Theorem]{Remark}
 \newtheorem{Conjecture}[Theorem]{Conjecture}
 \newtheorem{Computation}[Theorem]{Computation}
\def\ZZ{\mathbb{Z}}
\begin{document}
 \title {Classification of normal phylogenetic varieties for tripods}
\keywords {group-based model, projective variety, polytope, normal}
 
 \author{Rodica Andreea Dinu}
\address{%
	University of Konstanz, Fachbereich Mathematik und Statistik, Fach D 197 D-78457 Konstanz, Germany, and Simion Stoilow Institute of Mathematics of the Romanian Academy, Calea Grivitei 21, 010702, Bucharest, Romania}
	\email{rodica.dinu@uni-konstanz.de}

\author{Martin Vodi\v{c}ka}
\address{Šafárik University, Faculty of Science, Jesenná 5, 04154 Košice, Slovakia}
	\email{martin.vodicka@upjs.sk}

\maketitle

 \begin{abstract} 
 We provide a complete classification of normal phylogenetic varieties coming from tripods, and more generally, from trivalent trees. Let $G$ be an abelian group. We prove that the group-based phylogenetic variety $X_{G,\mathcal{T}}$, for any trivalent tree $\mathcal{T}$, is projectively normal if and only if $G\in \{\ZZ_2, \ZZ_3, \ZZ_2\times\ZZ_2, \ZZ_4, \ZZ_5, \ZZ_7\}$.
 \end{abstract}

 \maketitle

\section{Introduction}
Phylogenetics aims to investigate the evolution of species over time and to determine the genetic relationships between species based on their DNA sequences, \cite{fels}. A correspondence is established to highlight the differences between the DNA sequences and this is useful to reconstruct the relationships between species, \cite{kimura}.
The ancestral relationships can be encoded by the structure of a tree, which is called a \textit{phylogenetic tree}. Phylogenetics reveals connections with several parts of mathematics such as algebraic geometry \cite{bw, erss}, combinatorics \cite{matJCTA, RM2} and representation theory \cite{manon}. We consider the algebraic variety associated to a phylogenetic tree, called a \textit{phylogenetic variety}. The construction of this variety will be presented in this article and it can be consulted also in \cite{erss} for more details. For group-based models, this variety is toric \cite{HP,evans, sz}. Algebraic and geometric properties for these varieties are presented in \cite{RM, RM2, h-rep, mateusz, mateuszKimura, ss, martinko}. 

Normality is a very important property, as most of the results in toric geometry work only for normal varieties. The reader may consult \cite{brunsg, cox, fulton, binomialideals, sturmfels} and the references therein. A polytope $P$ whose vertices generate lattice $L$ is {\it normal} if every point in $kP\cap L$ can be expressed as a sum of $k$ points from $P\cap L$. The normality of polytope implies the projective normality of the associated projective toric variety. We are interested in understanding the normality property of group-based phylogenetic varieties; hence, the normality of their associated lattice polytopes.

A \textit{tripod} is a tree that has exactly one inner node and 3 leaves, and a \textit{trivalent tree} is a tree for which each node has vertex degree $\leq 3$.
By Theorem~\ref{key}  and \cite[Lemma 5.1]{mateusz}, it follows that, for a given group, in order to check the normality for any trivalent tree, it is enough to check the normality for the chosen group and the tripod. More generally, if one wants to check the normality for the algebraic variety for this group and any tree, it is enough to verify the normality for claw trees.

In addition, by \cite[Remark 2.2]{RM}, non-normality for tripods gives non-normality for any non-trivial tree (i.e. a tree that is not a path). 

Hence, it is important to understand the normality when the phylogenetic trees are tripods. Actually, the polytope $P_{G,3}$ associated to the tripod, encodes the group multiplication, which makes it an interesting object to study even without a connection to phylogenetics.

Buczy\'nska and Wi\'sniewski \cite{bw} proved that the toric variety associated to any trivalent tree and the group $\mathbb{Z}_2$ is projectively normal. The same result holds actually for any tree by using \cite{martinko}, where the normality was proved for the $3$-Kimura model (i.e. when $G=\ZZ_2\times \ZZ_2$). Also, computations from \cite{mateusz} show that $X_{\ZZ_4,\mathcal{T}}$ is normal for any trivalent tree $\mathcal{T}$.

In \cite[Proposition 2.1]{RM}, it was shown that, if $G$ is an abelian group of even cardinality greater or equal to 6, then the polytope $P_{G, 3}$ is not normal. As a consequence, in this case, the polytope $P_{G,n}$ for any $n$-claw tree is not normal, and hence, the algebraic variety $X_{G,n}$ representing this model is not projectively normal.

Hence, not every group-based phylogenetic variety is normal. Thus, there is a need for a classification.

We present now the structure of the article. In Section~\ref{prel}, we introduce the algebraic variety associated to a group-based model and its associated polytope. We provide also some results, known in the literature, such as the vertex description for $P_{G,3}$ and how a trivalent tree can be obtained by gluing tripod trees, and more generally, how an arbitrary tree can be obtained by gluing $n$-claw trees (i.e. trees that have one inner node, and $n$ leaves). Section~\ref{non-normal} is devoted to proving that the algebraic variety $X_{G,3}$ is not projectively normal for any abelian group having its order an odd number greater or equal to 11. In order to show the non-normality of a polytope, it is enough to find a lattice point that belongs to the $(mk)$-th dilation of the polytope in the corresponding lattice, but not in the $k$-th dilation of the polytope, again, in the corresponding lattice, for some integers $k,m$. Our strategy is to provide such examples for all abelian groups having the order an odd number $\geq 11$. For this, we consider a cubic graph $\Gamma=(E,V)$ whose edges are colored blue, yellow, and red such that no two adjacent edges have the same color. We call a function $f\:\; E \rightarrow \Gamma$ \textit{good} if, to any vertex, when taking the sum of the adjacent blue, yellow, and red vertices through function $f$ we obtain 0. In Lemma~\ref{condition}, and later, in Lemma~\ref{condition2} which is the key of our proofs from this section, we give the sufficient conditions that should be satisfied by a 3-colorable graph. that it is not bipartite, and a good function in order to obtain a lattice point that has required the property, which destroys the normality of the polytope. In Theorem~\ref{43}, we provide a suitable graph and we show the existence of a good function, showing that if $G$ is an abelian group of odd order which is greater than 43, then $P_{G,3}$ is not normal. If the order of $G$ is an odd number between 12 and 43, we prove in Theorem~\ref{odd11}, that again, the polytope $P_{G,3}$ is not normal, this time by proving concrete examples of good functions. In Theorem~\ref{11}, we show that $P_{\ZZ_{11}, 3}$ is not normal, but we find a larger graph with a good function that satisfies the properties from Lemma~\ref{condition2}.
In Section~\ref{comp}, we present some computational results and the code we used to obtain those results. Computation~\ref{5and7} shows that $P_{\ZZ_5, 3}$ and $P_{\ZZ_7, 3}$ are normal, while Computation~\ref{9and3x3} shows that $P_{\ZZ_9, 3}$ and $P_{\ZZ_3\times \ZZ_3, 3}$ are not-normal. 
Section~\ref{main} is devoted to the main result of this article, Theorem~\ref{class}, which unifies all the results obtained apriori and provides a complete classification of the group-based phylogenetic varieties for tripods:\\

\textbf{Theorem.}
Let $G$ be an abelian group. Then the polytope $P_{G,3}$ associated to a tripod and the group $G$ is normal if and only if $G\in\{\ZZ_2, \ZZ_3, \ZZ_2\times \ZZ_2, \ZZ_4, \ZZ_5, \ZZ_7\}$. Moreover, this result holds for the polytope $P_{G,\mathcal{T}}$ associated to any trivalent tree.\\

As a consequence, we obtain that:\\

\textbf{Corollary.}
Let $G$ be an abelian group. Then the group-based phylogenetic variety $X_{G,3}$ associated to a tripod and the group $G$ is projectively normal if and only if $G\in\{\ZZ_2, \ZZ_3, \ZZ_2\times \ZZ_2, \ZZ_4, \ZZ_5, \ZZ_7\}$.
Moreover, this result holds for the group-based phylogenetic variety $X_{G,\mathcal{T}}$ associated to any trivalent tree.


\section{Preliminaries}\label{prel}

In this section, we introduce the notation and preliminaries that will be used in the article. The reader may consult also \cite{ss, h-rep, RM}.

\subsection{The algebraic variety associated to a group-based model} We present here the construction of the algebraic variety associated to a model. 
A \textit{phylogenetic tree} is a simple, connected, acyclic graph that will come together with some statistical information. We will denote its vertices by $V$ and its edges by $E$. A vertex $v$ is called a \textit{leaf} if it has valency 1, and all the vertices that are not leaves will be called \textit{nodes}. The set of all leaves will be denoted by $\mathcal{L}$. The edges of $T$ are labeled by transition probability matrices $\mathcal{M}$ which show the probabilities of changes of the states from one node to another. A \textit{representation} of a model on a phylogenetic tree $T$ is an association $E \rightarrow \mathcal{M}$. We denote the set of all representations by $\mathcal{R}(T)$. Each node of $T$ is a random variable with $k$ possible states chosen from the state space $S$. To each vertex $v$ of $T$ we associate an $|S|$-dimensional vector space $V_v$ with basis $(v_s)_{s\in S}$. An element of $\mathcal{M}$ associated to $e:=(v_1, v_2) \in E$ may be viewed as an element of the tensor product $V_{v_1}\tensor V_{v_2}$. We fix a representation $M\in \mathcal{R}(T)$ and an association $a\:\; \mathcal{L} \rightarrow S$. Then the probability of $a$ may be computed as follows:
\[
P(M, a)= \sum_{\sigma} \prod_{(v_1, v_2)\in E} (M(v_1, v_2))_{(\sigma(v_1), \sigma(v_2))},
\]
where the sum is taken over all associations $\sigma \:\; V \rightarrow S$ such that their restrictions to $\mathcal{L}$ coincide to $a$. By identifying the association $a$ with a basis element $\tensor_{l\in \mathcal{L}} l_{a(l)} \in \bigotimes_{l\in \mathcal{L}}V_l$, we get the map:
\[
\Theta\:\; M \rightarrow \sum_{a} (P(M\tensor a)\tensor_{l\in \mathcal{L}} l_{a(l)})\in \bigotimes_{l\in \mathcal{L}} V_l.
\]
The Zariski closure of this map is an algebraic variety that represents the model and we call it a \textit{phylogenetic variety}. For group-based models, we denote this variety by $X_{G,T}$, where $G$ is the group representing the model and $T$ is the tree as above. We call it a \textit{group-based phylogenetic variety}.

\subsection{The polytope associated to a group-based phylogenetic variety}

For special classes of phylogenetic varieties, Hendy and Penny \cite{HP} and, later, Erdős, Steel, and Székely \cite{sz}, used the Discrete Fourier Transform in order to turn the map $\Theta$ into a monomial map. 
In particular, it is known that the group-based phylogenetic variety $X_{G,T}$ is a \textit{toric variety}. Hence, one can use toric methods when working with it, because the geometry of a toric variety is completely determined by the combinatorics of its associated lattice polytope. We denote the polytope associated to the projective toric variety $X_{G,T}$ by $P_{G,T}$. When $T$ is the $n$-claw tree. we denote the corresponding polytope by $P_{G,n}$. In this paper, we will always work with 3-claw trees, which are also called \textit{tripods}, and we will denote the corresponding polytopes by $P_{G,3}$.

\subsection{Vertex description of $P_{G,3}$}

We introduce some notation that will be used when working with polytopes $P_{G,3}\subset\mathbb R^{3|G|}$. We label the coordinates of a point $x\in\mathbb R^{3|G|}$ by $x_g^j$, where $1\le j\le 3$ corresponds to an edge of the tree, and $g\in G$ corresponds to a group element. For any point $x\in \mathbb Z_{\ge 0}^{3|G|}$, we define its $G$-presentation as an $3$-tuple $(G_1, G_2,G_3)$ of multisets of elements of $G$. Every element $g\in G$ appears exactly $x_g^j$ times in the multiset $G_j$. We denote by $x(G_1,G_2,G_3)$ the point with the corresponding $G$-presentation. 

The vertex description of the polytope $P_{G,3}$ (and, more generally, for $P_{G,n}$) is known and may be consulted in \cite{bw, mateusz, ss}. We recall this description in terms of $G$-presentations.

\begin{Theorem} 
The vertices of the polytope $P_{G, 3}$ associated to a finite abelian group $G$ and a tripod are exactly the points $x(G_1, G_2,G_3)$ with $G_1+G_2+G_3=0$.

Let $L_{G, 3}$ be the lattice generated by vertices of $P_{G,3}$. Then
 $$L_{G,3}=\{x\in \mathbb Z^{3|G|}:\sum_{g,j}x_g^j\cdot g=0,
\forall \text{ } 1\le j,j' \le 3, \sum_g x_g^j=\sum_g x_g^{j'} \}.$$
where the first sum is taken in the group $G$.
\end{Theorem}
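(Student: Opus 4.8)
The statement splits into two parts, the vertex description and the computation of the lattice they generate, and I would treat them in that order.

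For the vertices, recall that $P_{G,3}$ is by construction the convex hull of the exponent vectors of the monomial parametrization, i.e.\ of the $0/1$-points $x(g_1,g_2,g_3)$ (each multiset a singleton) subject to $g_1+g_2+g_3=0$. It therefore suffices to show that each such point is a vertex and that there are no others; the latter is automatic, since every vertex of a convex hull lies in its generating set. For the former, note that all these points satisfy $\sum_g x_g^j=1$ for $j=1,2,3$, so $P_{G,3}$ is contained in the product of simplices $\Delta=\{x\ge 0:\sum_g x_g^j=1,\ j=1,2,3\}$. Each generating point is a $0/1$-vector with a single $1$ in every block, hence a vertex of $\Delta$, and a point that is extreme in the larger polytope $\Delta$ and lies in the subpolytope $P_{G,3}$ is extreme there as well. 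This gives the vertex description.

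For the lattice, write $L$ for the right-hand side and set $\sigma(x)=\sum_{g,j}x_g^j g\in G$. The inclusion $L_{G,3}\subseteq L$ is immediate: each vertex $v$ has all block-levels equal to $1$ and $\sigma(v)=g_1+g_2+g_3=0$, and both defining conditions are preserved under $\ZZ$-linear combinations (levels add, $\sigma$ is additive). The content is the reverse inclusion. Since a vertex has level $1$ and the level map $L\to\ZZ$, $x\mapsto\sum_g x_g^1$, is surjective with kernel $L_0=\{x\in L:\sum_g x_g^j=0 \text{ for all }j\}$, it suffices to show that $L_0$ is generated by differences of vertices. The basic building blocks are, for each pair of blocks $(j,j')$ and elements with $(a-b)+(c-d)=0$, the vectors $(e_{(j,a)}-e_{(j,b)})+(e_{(j',c)}-e_{(j',d)})$; each of these equals a difference of two honest vertices (the common third coordinate being forced by the sum-zero condition, which is exactly why $(a-b)+(c-d)=0$ is imposed), hence lies in $L_{G,3}$.

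The reduction then runs in three steps on an arbitrary $x=(y^1,y^2,y^3)\in L_0$. First, as $\sum_g y^3_g=0$, I write $y^3$ as an integer combination of differences $e_{(3,p)}-e_{(3,q)}$ and subtract matching $(1,3)$-blocks above to clear the third block; repeating with $(1,2)$-blocks clears the first, leaving $(0,z,0)$ with $\sum_g z_g=0$ and $\sigma(z)=0$, i.e.\ $z$ in $K=\{z\in\ZZ^{|G|}:\sum_g z_g=0,\ \sum_g z_g\,g=0\}$. The genuinely new input is the claim that $K$ is generated by the vectors $w_{a,b}=e_a+e_b-e_{a+b}-e_0$. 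I would prove this by noting that killing the $w_{a,b}$ makes $g\mapsto\overline{e_g}-\overline{e_0}$ an additive map out of $G$, which produces a splitting of the surjection $\ZZ^{|G|}/\langle w_{a,b}\rangle\twoheadrightarrow\ZZ^{|G|}/K\cong\ZZ\oplus G$ and forces it to be an isomorphism, whence $\langle w_{a,b}\rangle=K$. Finally each $w_{a,b}$, placed in the second block, is itself a sum of two elementary $(1,2)$-differences (a one-line explicit check), so $(0,z,0)\in L_{G,3}$ and the reduction closes, giving $L\subseteq L_{G,3}$.

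The main obstacle is precisely this last algebraic input: that the ``group relations'' $K$ are generated by the elementary relations $w_{a,b}$, together with the verification that each $w_{a,b}$ is visibly a difference of vertices. Everything else is bookkeeping that reduces the sum-zero conditions to the root lattice $A_{|G|-1}$.
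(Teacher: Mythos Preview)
The paper does not give its own proof of this theorem; it states it as a known result and refers the reader to \cite{bw,mateusz,ss}. So there is nothing to compare against on the paper's side.

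Your argument is correct. The vertex part is clean: the generating $0/1$ points are vertices of the ambient product of simplices, hence extreme in $P_{G,3}$. For the lattice, your reduction is sound. The building blocks $(e_{(j,a)}-e_{(j,b)})+(e_{(j',c)}-e_{(j',d)})$ with $(a-b)+(c-d)=0$ are genuine differences of vertices (fix the third coordinate to $-(a+c)=-(b+d)$), and subtracting them lets you clear two blocks in turn while staying in $L_0$. The remaining step, that the kernel $K=\{z:\sum z_g=0,\ \sum z_g g=0\}$ of the map $\ZZ^{|G|}\to\ZZ\oplus G$, $e_g\mapsto(1,g)$, is generated by the $w_{a,b}=e_a+e_b-e_{a+b}-e_0$, is handled correctly by your splitting argument: the assignment $(1,0)\mapsto \bar e_0$, $(0,g)\mapsto \bar e_g-\bar e_0$ is a well-defined surjective section of $\ZZ^{|G|}/\langle w_{a,b}\rangle\twoheadrightarrow \ZZ\oplus G$, so the surjection is an isomorphism and $\langle w_{a,b}\rangle=K$. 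Finally, your claimed decomposition of $(0,w_{a,b},0)$ as a sum of two $(1,2)$-building blocks works, e.g.
\[
(0,w_{a,b},0)=\big[(e_{(1,b)}-e_{(1,0)})+(e_{(2,a)}-e_{(2,a+b)})\big]+\big[(e_{(1,0)}-e_{(1,b)})+(e_{(2,b)}-e_{(2,0)})\big].
\]
So the proposal is complete; it simply supplies a proof where the paper only points to the literature.
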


In addition, there is an implemented algorithm that can be used to obtain the vertices of $P_{G,3}$ for several abelian groups $G$. It can be found in \cite{marysia}.

\subsection{Reduction to simpler trees}

The toric fiber product of two homogeneous ideals belonging to two multigraded polynomial rings having the same multigrading is a construction due to Sullivant~\cite{seth}, which has interesting applications when working with polytopes $P_{G,3}$ (and, even more generally, $P_{G,n}$). More details may be consulted in \cite{seth, RM}.

We will use the following result due to Sullivant:

\begin{Theorem}(\cite[Theorem 3.10]{seth})\label{key}
The polytope associated to any trivalent tree $\mathcal{T}$, the polytope $P_{G, \mathcal{T}}$ can be expressed as the fiber product of polytopes associated to the tripod, $P_{G, 3}$.
More generally, the polytope associated to any tree $T$, the polytope $P_{G,T}$ can be expressed as the fiber product of polytopes associated to the $n$-claw tree, $P_{G,n}$.
\end{Theorem}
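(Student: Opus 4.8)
The plan is to specialize Sullivant's toric fiber product construction \cite{seth} to the present setting, using the explicit vertex description of $P_{G,3}$ recalled above as the only combinatorial input, and to run an induction on the number of internal edges of the trivalent tree $\mathcal{T}$. In the base case $\mathcal{T}$ is a tripod, so $P_{G,\mathcal{T}}=P_{G,3}$ and there is nothing to prove. For the final, more general assertion the same induction is carried out verbatim with $n$-claw trees in place of tripods, since the claw trees are exactly the building blocks obtained after cutting all internal edges.

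First I would fix the gluing operation. Cutting $\mathcal{T}$ along an internal edge $\epsilon=(u,w)$ produces two trivalent trees $\mathcal{T}_1$ and $\mathcal{T}_2$, each with strictly fewer internal edges, in which $\epsilon$ becomes a new leaf edge; conversely $\mathcal{T}$ is recovered by identifying the gluing leaf of $\mathcal{T}_1$ with that of $\mathcal{T}_2$. The common multigrading is by the group $G$: to every vertex of $P_{G,\mathcal{T}_i}$ I attach the group element carried by its gluing leaf (for a tripod vertex $x(G_1,G_2,G_3)$ whose gluing leaf is the $j$-th one, this is the unique element of the singleton multiset $G_j$). With respect to this grading the toric fiber product $P_{G,\mathcal{T}_1}\times_G P_{G,\mathcal{T}_2}$ is, by definition, generated by those pairs of vertices whose gluing-leaf elements coincide, and I must show that these pairs are precisely the vertices of $P_{G,\mathcal{T}}$.

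The key step is the combinatorial bijection, which I would phrase through the edge-labeling description of vertices: a vertex of $P_{G,\mathcal{T}}$ is an assignment $\ell\colon E\to G$ such that $\sum_{e\ni v}\ell(e)=0$ at every internal vertex $v$. Setting $\gamma:=\ell(\epsilon)$ and splitting $\epsilon$ into two leaf edges, each of which inherits the value $\gamma$, the labeling $\ell$ restricts to labelings of $\mathcal{T}_1$ and $\mathcal{T}_2$. Each restriction satisfies the vertex conditions of its piece, because the summation condition at $u$ (respectively $w$) is untouched once $\epsilon$ is regarded as a leaf there, and the two restrictions agree on $\gamma$. Conversely, any pair of vertices agreeing on $\gamma$ glues back to a single consistent labeling of $\mathcal{T}$: the flow conditions away from $\epsilon$ are unchanged, and the conditions at $u$ and $w$ are exactly those recorded by the two pieces. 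A direct count of degrees of freedom ($|G|^{|E|-|\text{internal}|}$ on each side) confirms that this correspondence is a degree-preserving bijection, and hence identifies the two vertex sets.

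The main obstacle is not this bijection but upgrading it from a statement about vertices to a genuine identification of the ambient lattices, i.e.\ showing that $L_{G,\mathcal{T}}$ \emph{is} the fiber product of $L_{G,\mathcal{T}_1}$ and $L_{G,\mathcal{T}_2}$ over the $G$-grading rather than merely a sublattice of it. This is exactly the point at which Sullivant's hypotheses must be verified: the two grading maps must factor through a common quotient, and the grading must be surjective onto the relevant semigroup. Surjectivity is clear here because every $\gamma\in G$ is attained by some vertex of each piece (one may freely complete it to a consistent labeling), and the compatibility of the gradings is forced by the abelian group structure, which makes the splitting of $\epsilon$ symmetric in $\mathcal{T}_1$ and $\mathcal{T}_2$. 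Once this lattice compatibility is in place the inductive step closes, giving $P_{G,\mathcal{T}}=P_{G,\mathcal{T}_1}\times_G P_{G,\mathcal{T}_2}$, and iterating down to the tripods (respectively the $n$-claw trees) yields both assertions of the theorem.
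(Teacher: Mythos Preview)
The paper does not supply its own proof of this statement: Theorem~\ref{key} is simply quoted as \cite[Theorem~3.10]{seth} and used as a black box, with no argument given. So there is nothing in the paper to compare your proposal against.

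That said, your sketch is the standard and correct way to justify this fact. The induction on internal edges, the ``cut an internal edge and match the group element on the duplicated leaf'' bijection between vertices of $P_{G,\mathcal{T}}$ and matching pairs of vertices of $P_{G,\mathcal{T}_1}$ and $P_{G,\mathcal{T}_2}$, and the verification that the $G$-grading is surjective on each piece are exactly what underlies Sullivant's toric fiber product in this situation; this is also how the reduction is presented in the phylogenetics literature (e.g.\ \cite{ss,mateusz}). The only place to be careful is the lattice statement you flag yourself: for the purposes of this paper one needs that normality of the pieces implies normality of the fiber product, which in \cite{seth} requires the codimension-zero hypothesis (the common grading monoid is free, here a copy of $\mathbb{Z}$ graded by $G$), and this holds because the projection to the gluing coordinate is linearly independent. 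Your surjectivity remark handles that, so the argument closes.
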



\section{Non-normal tripods}\label{non-normal}

To show that a polytope $P$ is not normal it is sufficient to provide an example of a lattice point $x$, such that $x\in mk(P\cap L)$ and $x\not\in k(P\cap L)$ for some integers $k,m$.

We will provide such an example for all abelian groups $G$ with the order being an odd number greater than 12. Moreover, we will always have $m=2$.




Let $\Gamma=(V,E)$ be a cubic graph whose edges are colored blue, yellow, and red, such that no two adjacent edges have the same color. For any vertex $v$ of $\Gamma$ let us denote by $e_B(v)$, ($e_Y(v)$, $e_R(v)$) the blue (yellow, red) edge adjacent to the vertex $v$.  

Let us call a function $f:E\rightarrow G$ \emph{good} if for any vertex $v$ of $\Gamma$ we have $$f(e_B(v))+f(e_Y(v))+f(e_R(v))=0.$$  

To any good function $f$ we can associate a point $x(f)\in |V|(P_{G,3})$ in the following way: First, consider any vertex $v$ of $\Gamma$. To this vertex we associate a point $x(v,f):=x(f(e_B(v),f(e_Y(v),f(e_R(v))$ which is a vertex of $P_{G,3}$ since $f$ is good. Then we simply define $$x(f):=\sum_{v\in V} x(v,f).$$

Note that the point $x(f)/2$ is still a lattice point. Indeed, all coordinates of $x(f)$ are even since every edge is counted twice - once for both of its endpoints. Then the conclusion follows from the following auxiliary and more general result:

\begin{Lemma} \label{even-lattice}
Let $x\in L_{G,n}$ be a point such that all coordinates of $x$ are even. Then $x/2 \in L_{G,n}$.    
\end{Lemma}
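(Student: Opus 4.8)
The plan is to recall the explicit description of the lattice $L_{G,n}$. By the analogue of the Theorem stated for tripods (the vertex/lattice description generalizes to $n$-claw trees), the lattice $L_{G,n}$ is cut out inside $\ZZ^{n|G|}$ by two families of linear conditions: the group-sum condition $\sum_{g,j} x_g^j \cdot g = 0$ (an equation in $G$), and the balancing conditions $\sum_g x_g^j = \sum_g x_g^{j'}$ for all $1 \le j, j' \le n$ (equations in $\ZZ$). So to prove that $x/2 \in L_{G,n}$ it suffices to verify that $x/2$ is an integer vector satisfying both families of conditions, given that $x$ does.

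First I would observe that $x/2 \in \ZZ^{n|G|}$ is immediate from the hypothesis that all coordinates of $x$ are even. Next I would check the balancing conditions: since $x$ satisfies $\sum_g x_g^j = \sum_g x_g^{j'}$ for all $j, j'$, dividing this integer identity by $2$ gives $\sum_g (x/2)_g^j = \sum_g (x/2)_g^{j'}$, so $x/2$ satisfies them as well.

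The only substantive point is the group-sum condition, and this is where I expect the main (though still mild) obstacle to lie, because dividing by $2$ is a purely integer operation whereas the condition $\sum_{g,j} x_g^j \cdot g = 0$ lives in $G$, so one cannot naively ``divide the equation by $2$'' inside $G$. Instead I would argue at the level of the integer coefficients: the condition says that the formal sum $\sum_{g,j} x_g^j \cdot g$, regarded as an element of the group $G$ via the multiplication map $\ZZ^{n|G|} \to G$ sending the standard basis vector $e_g^j$ to $g$, equals $0$. Since all $x_g^j$ are even, write $x_g^j = 2 y_g^j$ with $y = x/2 \in \ZZ^{n|G|}$. Then $\sum_{g,j} x_g^j \cdot g = \sum_{g,j} 2 y_g^j \cdot g = 2 \bigl(\sum_{g,j} y_g^j \cdot g\bigr)$ in $G$. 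The hypothesis gives $2 \bigl(\sum_{g,j} y_g^j \cdot g\bigr) = 0$ in $G$, so $s := \sum_{g,j} y_g^j \cdot g$ is an element of $G$ with $2s = 0$.

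This is exactly where the hypothesis that $|G|$ is odd (throughout this section $G$ has odd order) becomes essential: in a group of odd order the only element of order dividing $2$ is the identity, so $2s = 0$ forces $s = 0$, which is precisely the group-sum condition for $x/2$. Hence $x/2$ satisfies both families of defining conditions and lies in $L_{G,n}$. I would flag in the writeup that the oddness of $|G|$ is used here, since for even $|G|$ the element $s$ could be a nonzero $2$-torsion element and the statement would genuinely require the ambient point $x = x(f)$ to have an additional structural property; but for the application at hand all relevant groups have odd order, so the elementary torsion argument suffices and this is the entirety of the proof.
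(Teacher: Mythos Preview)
Your proof is correct and follows essentially the same approach as the paper: write the group-sum condition for $x$ as $2s=0$ in $G$ with $s=\sum_{g,j}(x_g^j/2)\cdot g$, and use that $|G|$ is odd to conclude $s=0$. You are slightly more explicit than the paper in verifying the integrality and balancing conditions for $x/2$, but these are trivial and the substantive step is identical.
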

\begin{proof}
Since $x\in L_{G,n}$ we have $$0=\sum_{i=1}^n\sum_{g\in G\setminus \{0\}} x_g^i\cdot g=2\cdot \sum_{i=1}^n\sum_{g\in G\setminus \{0\}} (x_g^i/2)\cdot g.$$
Since $G$ has odd order, the only solution to the equation $g+g=0$ in $G$ is $g=0$. It follows that $$\sum_{i=1}^n\sum_{g\in G\setminus \{0\}} (x_g^i/2)\cdot g=0$$ 
and, hence, $x/2\in L_{G,n}$.
\end{proof}

If the point $x(f)$ can not be written as the sum of $|V|/2$ vertices of $P$, it is the point that proves the non-normality of $P_{G,3}$. Thus, it remains to show that there exists a graph $\Gamma$ and a good function $f:E\rightarrow G$ such that the point $x(f)$ has this property.

\begin{Lemma}\label{condition}
Let $\Gamma$ be a cubic 3-colorable graph which is not bipartite and $f:E\rightarrow G$ be a good function with the following property: if $f(e_1)+f(e_2)+f(e_3)=0$ for a blue edge $e_1$, yellow edge $e_2$ and red edge $e_3$, then $e_1=e_B(v),e_2=e_Y(v),e_3=e_R(v)$ for some vertex $v$ of $\Gamma$.
Then $x(f)/2$ can not be written as a sum of $|V|/2$ vertices of $P_{G,3}$.
\end{Lemma}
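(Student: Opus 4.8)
## Proof Plan

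The plan is to prove the contrapositive by a counting argument. Suppose, toward a contradiction, that $x(f)/2$ can be written as a sum of $|V|/2$ vertices of $P_{G,3}$, say $x(f)/2 = \sum_{i=1}^{|V|/2} x(A_i, B_i, C_i)$, where each triple satisfies $A_i + B_i + C_i = 0$ in $G$. Doubling, we obtain a second decomposition of $x(f)$ itself into $|V|$ vertices (each repeated twice). The first step is to record exactly what the multisets of entries of $x(f)$ are: by construction, the first coordinate block of $x(f)$ collects, across all vertices $v$, the values $f(e_B(v))$, and since each blue edge $e$ is adjacent to two vertices, the value $f(e)$ appears exactly twice. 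Thus the multiset of blue labels is $\{\!\{ f(e) : e \text{ blue} \}\!\}$ with each blue edge contributing once, doubled; similarly for yellow and red. The key structural input is the hypothesis on $f$: a blue/yellow/red triple $(e_1,e_2,e_3)$ sums to zero in $G$ \emph{only} when the three edges meet at a common vertex of $\Gamma$.

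Next I would set up the combinatorial translation. Each vertex $x(A_i,B_i,C_i)$ in the hypothetical decomposition matches one blue label $f(e_1)$, one yellow label $f(e_2)$, and one red label $f(e_3)$ from the available (doubled) multisets, subject to $f(e_1)+f(e_2)+f(e_3)=0$. By the good-function hypothesis, every such zero-sum triple is forced to be a genuine vertex triple $(e_B(v), e_Y(v), e_R(v))$ of $\Gamma$. Hence a decomposition of $x(f)$ into $|V|$ such vertices corresponds exactly to choosing, for each vertex $v$ appearing, a consistent assignment of edge-ends — in other words, to a way of pairing up the two copies of each edge-label into vertex-triples. I would phrase this as a perfect matching / orientation problem: the doubled data must partition into $|V|$ copies of vertex-triples of $\Gamma$, and I expect this to be equivalent to choosing an orientation or a proper $2$-coloring of the edges that is consistent at every vertex.

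The heart of the argument, and what I expect to be the main obstacle, is showing that the non-bipartiteness of $\Gamma$ obstructs such a consistent pairing when one tries to recover a genuine (undoubled) decomposition into $|V|/2$ vertices. The point is this: a single-multiplicity decomposition $x(f)/2 = \sum_{i=1}^{|V|/2} x(A_i,B_i,C_i)$ would select $|V|/2$ vertex-triples of $\Gamma$ using \emph{each} edge exactly once (since in $x(f)/2$ each edge-label appears once). This is precisely a perfect matching of the vertices of $\Gamma$ into the selected triples such that every edge is used exactly once — but selecting vertex-triples covering each edge once amounts to a proper partition of $V(\Gamma)$ realizing a consistent $2$-coloring, which exists if and only if $\Gamma$ is bipartite. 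I would make this rigorous by arguing that any valid selection of half the vertices using each edge once induces a bipartition of $\Gamma$ (the chosen vertices versus the unchosen), contradicting the assumption that $\Gamma$ is not bipartite. The delicate part is ruling out \emph{all} decompositions, including those that a priori mix labels from different edges; this is exactly where the good-function hypothesis is indispensable, since it forbids any zero-sum triple other than the honest vertex-triples, leaving no freedom beyond the combinatorial matching problem on $\Gamma$.

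Finally, I would assemble the pieces: the existence of the single-multiplicity decomposition forces a proper $2$-coloring of $\Gamma$, i.e.\ bipartiteness; since $\Gamma$ is assumed non-bipartite, no such decomposition exists, and therefore $x(f)/2$ cannot be written as a sum of $|V|/2$ vertices of $P_{G,3}$, as claimed.
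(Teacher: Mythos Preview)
Your approach is essentially the same as the paper's: assume a decomposition $x(f)/2=\sum_{i=1}^{m} p_i$ with $m=|V|/2$, use the hypothesis on $f$ to force each $p_i$ to equal $x(v_i,f)$ for some vertex $v_i\in V(\Gamma)$, and then observe that the resulting set $\{v_1,\dots,v_m\}$ meets every edge exactly once, giving a bipartition of $\Gamma$ and hence a contradiction. Two small points: the doubling detour is unnecessary (the paper works directly with $x(f)/2$), and the paper isolates as a separate preliminary step the fact that the hypothesis forces $f$ to be injective on each color class---this is what lets you pass unambiguously from a group element $A_i$ to a specific blue edge, and you should make it explicit rather than leaving it implicit in the phrase ``using each edge exactly once.''
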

\begin{proof}
For the beginning, let $m=|V|/2$.

We claim that $f(e)\neq f(e')$ for any pair of edges of the same color. Assume for contradiction that $f(e)=f(e')$ for some pair of edges of the same color. Let $e_1,e_2$ be edges that share one vertex with edge $e$. Then $0=f(e)+f(e_1)+f(e_2)=f(e')+f(e_1)+f(e_2)$.
It follows that also all edges $e',e_1,e_2$ all share one vertex, and therefore $e=e'$.

This means that in the $G$-presentation of $x$ all multisets have $m$ different elements.

Assume that $x/2=p_1+p_2+\dots+p_m$, where $p_i$ are vertices of $P_{G,n}$. Then $p_i=x(f(e_1^i),f(e_2^i),f(e_3^i))$ for suitable edges $e_j^i$ of the graph $\Gamma$.
From the condition from the lemma statement it follows that the edges $e_1^i, e_2^i,e_3^i$ are adjacent to one vertex $v_i$ of $\Gamma$, i.e. $p_i=x(v_i,f)$. 

Since $x/2=p_1+p_2+\dots+p_m=x(v_1,f)+\dots+x(v_m,f)$, it must be true that every edge is adjacent to exactly one of the vertices $v_1,\dots,v_m$. However, this implies that the graph $\Gamma$ is bipartite, with one partition being $v_1,\dots,v_m$. Contradiction.
\end{proof}

Now it is clear how to find the good function $f$ for which $x(f)/2\notin |V|/2(P_{G,3}\cap L_{G,3})$. We just need to find a graph $\Gamma$ and a function $f$ which satisfy the property from Lemma~\ref{condition}. If $G$ is large enough, one can hope that there must be a suitable choice of a good function $f$. However, this would lead to a large bound on the order of $G$. Thus, we will try to weaken the condition from Lemma~\ref{condition}:

\begin{Lemma}\label{condition2}
Let $\Gamma$ be a cubic 3-colorable graph and $T=\{t_B,t_Y,t_R\}$ be blue, yellow, and red edge in $\Gamma$ that form a triangle. Let $f:E\rightarrow G$ be a good function with the following properties:
\begin{enumerate}[(i)]
\item $f(t_B)+f(t_Y)+f(t_R)\neq 0$,
\item $f(t)\neq f(e)$, for any different edges $t\in T, e\in E$ of the same color, 
\item $f(t)+f(e_1)+f(e_2)\neq 0$ for any edges $t\in T, e\in E\setminus T$ of different color.
\end{enumerate}
Then $x(f)/2$ can not be written as a sum of $|V|/2$ vertices of $P_{G,n}$.
\end{Lemma}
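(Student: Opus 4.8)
The plan is to adapt the argument of Lemma~\ref{condition}, weakening the rigidity requirement by exploiting the single distinguished triangle $T$. As before, suppose for contradiction that $x(f)/2 = p_1 + \dots + p_m$ with $m = |V|/2$, where each $p_i$ is a vertex of $P_{G,n}$, so that $p_i = x(f(e_1^i), f(e_2^i), f(e_3^i))$ for some blue edge $e_1^i$, yellow edge $e_2^i$, and red edge $e_3^i$ satisfying $f(e_1^i)+f(e_2^i)+f(e_3^i)=0$. First I would observe that conditions (ii) and (iii), restricted away from $T$, still force each triple $(e_1^i, e_2^i, e_3^i)$ to be either a genuine vertex-star $(e_B(v), e_Y(v), e_R(v))$ or else to involve at least one edge of $T$; the point of the weaker hypotheses is precisely to allow the three triangle edges $t_B, t_Y, t_R$ to participate in summands that are \emph{not} vertex-stars, while controlling how they do so.

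The key combinatorial step I expect is to count, for each color, how often the edge of $T$ of that color appears among the $p_i$. Since every edge of $\Gamma$ appears exactly twice in $x(f)$ (once per endpoint), each triangle edge $t_B, t_Y, t_R$ must appear exactly once in $x(f)/2$, hence in exactly one summand $p_i$. Condition (i), $f(t_B)+f(t_Y)+f(t_R)\neq 0$, guarantees that the three triangle edges cannot all sit in a single summand. Condition (iii) then rules out any summand that mixes a triangle edge with two non-triangle edges of the complementary colors summing to zero against it, so a summand containing a triangle edge $t$ must contain a second edge whose color-partner forces it, too, to be a triangle edge. I would trace this dependency to show that the triangle edges cannot be partitioned into valid zero-sum triples at all among the $p_i$, producing the contradiction.

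The main obstacle will be handling the summands that contain exactly one triangle edge: one must verify that conditions (ii) and (iii) together leave no room for such a triple to complete to a valid vertex $x(a,b,c)$ with $a+b+c=0$ unless the other two edges are also triangle edges, which is then blocked by (i). Concretely, if $t_B$ lies in a summand together with a yellow edge $e$ and a red edge $e'$, condition (iii) forbids $e, e' \notin T$, so at least one of $e, e'$ lies in $T$; iterating this argument around the forced zero-sum relations pins the whole summand to $T$, contradicting (i). I would present this as a short case analysis on which triangle edges co-occur, concluding that no consistent decomposition into $m$ vertices exists, exactly as in the proof of Lemma~\ref{condition}, but now the obstruction comes from the triangle $T$ rather than from global bipartiteness.
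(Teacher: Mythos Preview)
Your overall strategy is right and matches the paper's: track how the three triangle edges $t_B,t_Y,t_R$ are distributed among the hypothetical summands $p_i$, using (ii) to see that each occurs in exactly one $p_i$, and then derive a contradiction from (i) and (iii).

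However, the step ``iterating this argument around the forced zero-sum relations pins the whole summand to $T$'' is false. Suppose a summand contains $t_B$ and $t_Y$ together with a red edge $e'\in E\setminus T$ satisfying $f(t_B)+f(t_Y)+f(e')=0$. Applying (iii) with $t=t_Y$ only tells you that at least one of the remaining two edges lies in $T$, and $t_B$ already does; you learn nothing about $e'$. So a summand may perfectly well contain \emph{exactly two} triangle edges, and your conclusion that any triangle-meeting summand must equal $\{t_B,t_Y,t_R\}$ (hence contradict (i)) is not justified. Relatedly, your first-paragraph claim that triples disjoint from $T$ are forced to be vertex-stars is not implied by (i)--(iii) either; fortunately it is not needed.

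The paper closes this with a parity count, which your promised case analysis recovers once corrected. Conditions (i) and (iii) say precisely that the number of triangle edges in any summand is never $1$ or $3$, hence always even. But the total over all summands is $3$, and a sum of even numbers cannot be odd. Concretely, the paper evaluates the linear functional $p\mapsto p^1_{f(t_B)}+p^2_{f(t_Y)}+p^3_{f(t_R)}$: it equals $3$ on $y=x(f)/2$ but is even on every vertex $p_i$. Equivalently, in your case analysis the configurations ``two together, one alone'' and ``all three separate'' each produce a summand with exactly one triangle edge and two edges in $E\setminus T$, which (iii) forbids; together with (i) this exhausts the cases.
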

\begin{proof}
Analogously as in the proof of Lemma~\ref{condition}, we denote $m=|V|/2$ and we assume that $x(f)/2=p_1+\dots+p_m$, where $p_i=x(f(e_1^i),f(e_2^i),f(e_3^i))$ for suitable edges of the graph $\Gamma$. Note that the second condition implies $$x_f(t_B)^1+x_f(t_Y)^2+x_f(t_R)^3=2+2+2=6$$
which means that for the point $y:=x(f)/2$ we obtain that
$$y_f(t_B)^1+y_f(t_Y)^2+y_f(t_R)^3=1+1+1=3.$$ For any point $p=x(f(e_1),f(e_2),f(e_3))$, we have that $$p_f(t_B)^1+p_f(t_Y)^2+p_f(t_R)^3\not\in\{1,3\}.$$
This is a consequence of the first and the third condition. This implies that for all $p_i$, the sum $p_f(t_B)^1+p_f(t_Y)^2+p_f(t_R)^3$ is even. However, this is a contradiction with $p_1+\dots+p_m=y$, since the sum of even numbers can never be an odd number.
\end{proof}

Now we provide an example of graph $\Gamma$ and good functions $f$. Let $\Gamma$ be the following cubic graph on 12 vertices.
\begin{figure}[H]
  \includegraphics[width=\linewidth, scale=0.4, width=4.5cm]{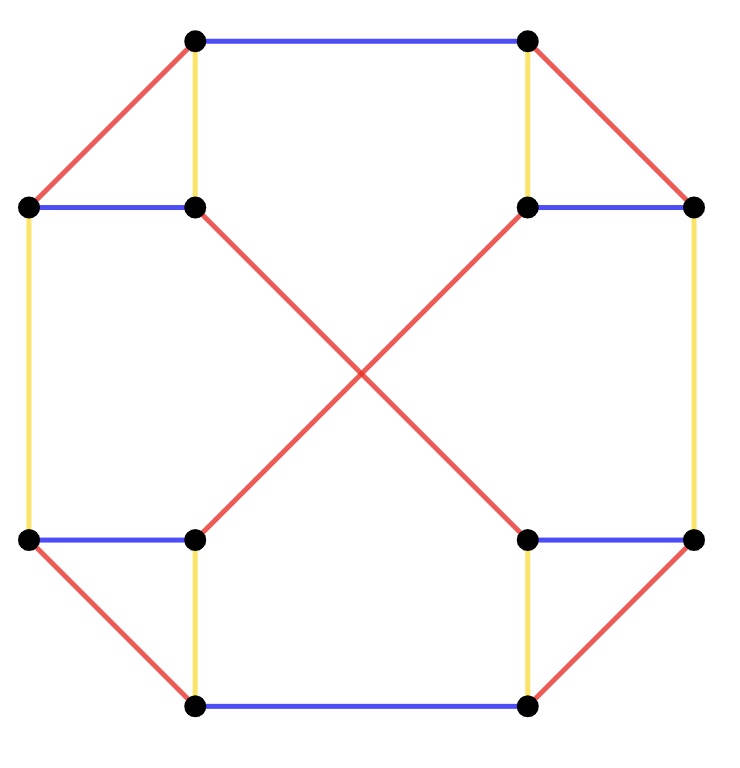}
  \caption{}
  \label{graf}
\end{figure}

First, we note that a good function is uniquely determined by the values of six edges that do not lie in any triangle. We denote the values of these edges by $h_1,\dots,h_6\in G$, as in Figure~\ref{graf2}. Then the value of the other edges is determined by expressions as in the figure. Note that, since $|G|$ is odd, the function $(1/2)\cdot g$ is well-defined for any element $g$.

To see, that the values of other edges are, in fact, determined by $h_1,\dots,h_6$, let us consider the edges $e_B,e_Y,e_R$ which form the left upper triangle. Since $f$ is good, we must have 
$$ f(e_B)+f(e_Y)+h_3=f(e_B)+h_2+f(e_R)=h_1+f(e_Y)+f(e_R)=0.$$

By summing up the first two equations and subtracting the last one, we obtain

$$2f(e_B)+h_3+h_2-h_1=0 \Leftrightarrow f(e_B)=\frac12 \cdot (-h_1+h_2+h_3))$$

Analogously, we can determine the values of other edges. It is easy to check, that for any choice of $h_1\dots,h_6$ this defines a good function $f$.

\begin{figure}[h!]
  \includegraphics[width=\linewidth, scale=0.5, width=8cm]{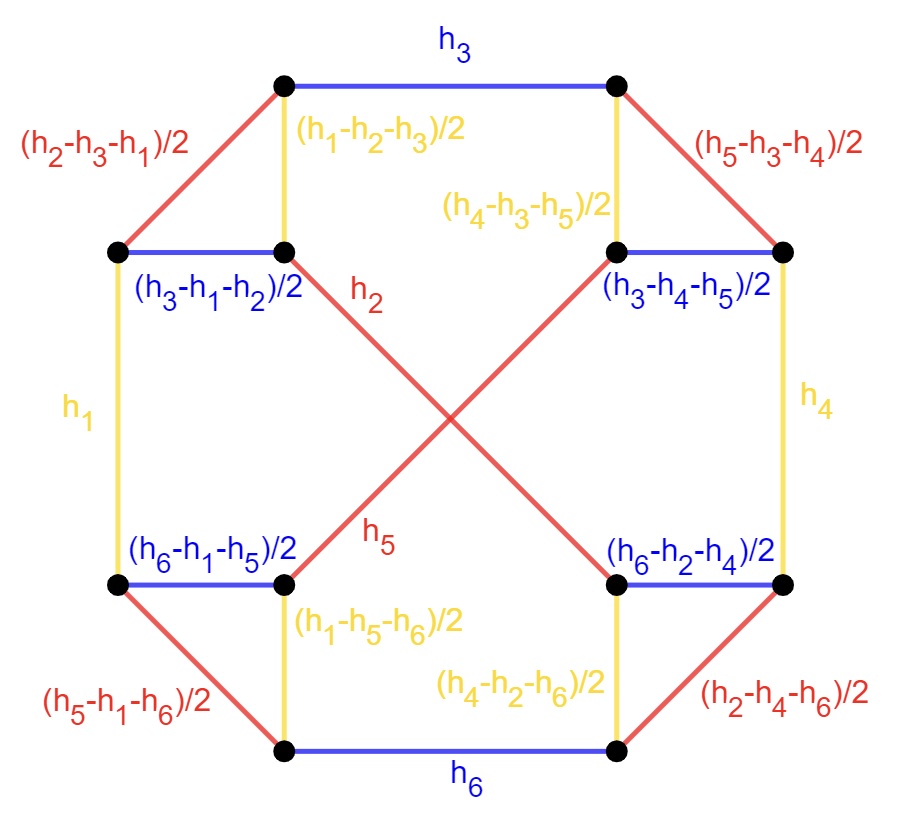}
  \caption{}
  \label{graf2}
\end{figure}

\begin{Theorem}\label{43}
Let $G$ be an abelian group of odd order which is greater than 43. Then $P_{G,3}$ is not normal.    
\end{Theorem}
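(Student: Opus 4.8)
The plan is to apply Lemma~\ref{condition2} to the specific 12-vertex cubic graph $\Gamma$ from Figure~\ref{graf}, using the parametrization of good functions by the six free values $h_1,\dots,h_6$. Since every good function on $\Gamma$ is determined by these six parameters, the task reduces to showing that, whenever $|G|$ is odd and $|G| > 43$, there exists a choice of $h_1,\dots,h_6 \in G$ and a distinguished triangle $T=\{t_B,t_Y,t_R\}$ in $\Gamma$ so that the three non-vanishing conditions (i), (ii), (iii) of Lemma~\ref{condition2} all hold. By that lemma, $x(f)/2$ then fails to be a sum of $|V|/2 = 6$ vertices of $P_{G,3}$, while $x(f)/2 \in L_{G,3}$ by Lemma~\ref{even-lattice} (here we use that $|G|$ is odd); hence $P_{G,3}$ is not normal.

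\medskip

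First I would fix one of the triangles of $\Gamma$ as the distinguished $T$ and, using the rules in Figure~\ref{graf2}, write each of the three relevant quantities appearing in conditions (i)--(iii) as an explicit affine-linear expression in $h_1,\dots,h_6$ over $G$ (with the well-defined halving map $g \mapsto (1/2)g$, available since $|G|$ is odd). Each condition forbids a certain affine-linear combination of the $h_i$ from equalling $0$. Counting: condition (i) is a single forbidden equation; condition (ii) forbids $f(t) = f(e)$ for each same-color edge $e \neq t$ and each $t \in T$; condition (iii) forbids $f(t)+f(e_1)+f(e_2)=0$ for each admissible pair $(e_1,e_2)$ of the remaining two colors and each $t \in T$. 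The number of such forbidden equations is a fixed constant $N$ determined purely by the combinatorics of $\Gamma$ (independent of $G$).

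\medskip

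The counting argument is then the heart of the proof: each forbidden condition is an affine-linear equation in the six variables $(h_1,\dots,h_6)$ that is \emph{non-degenerate}, i.e. at least one $h_i$ genuinely appears. Over $G$, any single such equation is satisfied by at most $|G|^5$ of the $|G|^6$ possible tuples. Therefore the number of tuples $(h_1,\dots,h_6)$ violating at least one condition is at most $N \cdot |G|^5$, and a valid good function exists as soon as $|G|^6 > N \cdot |G|^5$, that is, $|G| > N$. The claim is that for the chosen graph $\Gamma$ one has $N \le 43$, which yields the stated bound $|G| > 43$. I would verify $N \le 43$ by directly tabulating the equations coming from (i), (ii), (iii) for the twelve edges and the chosen triangle, checking in each case that the equation is nontrivial (some $h_i$ appears with a nonzero coefficient) so that the $|G|^5$ bound applies.

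\medskip

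\textbf{The main obstacle} I expect lies in the non-degeneracy verification within this counting step. The bound $|G|^5$ per equation is valid only for equations in which some variable truly occurs; if one of the forbidden combinations were to collapse to a constant (independent of all $h_i$), it would either always fail or always hold, and the latter case would be fatal. Because half-coefficients appear, one must check carefully that no cancellation among the $h_i$ makes a forbidden expression identically zero as a function of the parameters, and that condition (i) in particular is a genuine constraint rather than a tautology forced by the structure of $\Gamma$. Establishing this for every one of the $N$ conditions on the concrete graph — and confirming the precise count $N \le 43$ — is the delicate bookkeeping on which the theorem hinges; the remaining deduction from Lemma~\ref{condition2} and Lemma~\ref{even-lattice} is then immediate.
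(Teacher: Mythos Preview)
Your proposal is correct and follows essentially the same counting argument as the paper. The one point you leave to ``direct tabulation'' is how the count comes out to exactly $N=43$: the paper observes that a type-(iii) condition $f(t)+f(e_1)+f(e_2)\neq 0$ is redundant whenever any two of $t,e_1,e_2$ are adjacent (it then reduces to a type-(ii) inequality already listed), so only the $9+9+9=27$ pairwise non-adjacent triples survive, giving $1+15+27=43$ constraints in total.
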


\begin{proof}
Clearly, it is sufficient to provide an example of a good function $f$ that satisfies the conditions from Lemma~\ref{condition2}. For this, we need to provide the corresponding elements $h_1,\dots,h_6\in G$. Note that every condition from Lemma~\ref{condition2} requires that a certain linear form in $h_1,\dots,h_6$ must be different from 0. We want to show that the number of 6-tuples $(h_1,\dots,h_6)\in G^6$ that satisfies all conditions is positive.

Clearly, there are $|G|^6$ 6-tuples in $G^6$. Let $L(h_1,\dots,h_6)=l_1h_1+\dots+l_6h_6$ be a linear form such that $l_1,\dots,l_6\in\ZZ$ and $G$ is $l_i$-divisible for at least one index $i$. Then the number of 6-tuples for which $L(h_1,\dots,h_6)=0$ is $|G|^5$. To see this, simply pick a number $l_i$ such that $G$ is $l_i$-divisible. Then the element $h_i$ is uniquely determined by the choice of other group elements.

Now we simply compute the number of conditions that are in Lemma~\ref{condition2}. There is one linear form for $(i)$, there are $5\cdot 3=15$ conditions of type $(ii)$.

We note that some of the conditions of type $(iii)$ are redundant. Let $T$ be a triangle as in the statement of Lemma~\ref{condition2}. Consider the condition $f(t)+f(e_1)+f(e_2)\neq 0$, where the edges $e_1$ and $t$ are adjacent. Let $t'$ be the third edge adjacent to their common vertex. Clearly, $t'\in T$ and it is of the same color as edge $e_2$. Since $f(t)+f(e_1)+f(t')=0$, the condition $f(t)+f(e_1)+f(e_2)\neq 0$ is equivalent with $f(e_2)\neq f(t')$, which a condition from $(ii)$. Analogously, the condition $f(t)+f(e_1)+f(e_2)\neq 0$ is redundant also in the case where $e_2$ and $t$ are adjacent.

Similarly, consider the condition $f(t)+f(e_1)+f(e_2)\neq 0$ for adjacent edges $e_1$ and $e_2$. Let $e_3$ be the third edge adjacent to their common vertex. As, in the previous case, we have that $f(t)+f(e_1)+f(e_2)\neq 0$ is equivalent to $f(t)\neq f(e_3)$ which is, again, a condition of type $(ii)$.

Thus, it is sufficient to consider only such triples $t,e_1,e_2$ such that no two of them are adjacent. If the edge $t=t_B$, one can easily count that there are $9$ pairs of edges $e_1,e_2$ which are yellow and red, such that no two of the edges $t,e_1,e_2$ are adjacent. The same is true for $t=t_Y$ and $t=t_R$. Thus there are only $9+9+9=27$ (non-redundant) conditions of type $(iii)$.

One can easily check, that each of these conditions, indeed corresponds to a linear form $L(h_1,\dots,h_6)$ such that $G$ is $l_i$-divisible for at least one of its coefficients.

Therefore, there are at least $|G|^6-(1+15+27)|G|^5=|G|^5(|G|-43)>0$ 6-tuples of elements $(h_1,\dots,h_6)$ such that the corresponding good function $f$ satisfies all conditions from Lemma~\ref{condition2} which proves the desired result.

\end{proof}

In the previous result, we just show the existence of a good function $f$ without actually providing a concrete construction. An alternative approach is to find a good function $f$ simply by trying some 6-tuples $h_1,\dots,h_6$. This way, we are able to prove non-normality also for some smaller groups:

\begin{Theorem}\label{odd11}
Let $G$ be an abelian group of odd order which is greater than 11. Then $P_{G,3}$ is not normal.    
\end{Theorem}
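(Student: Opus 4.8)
### Proof proposal for Theorem~\ref{odd11}

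The plan is to reduce the claim for groups of odd order greater than 11 to a finite set of cases and dispatch each one by exhibiting an explicit good function via a concrete choice of $h_1,\dots,h_6$. Theorem~\ref{43} already settles all odd orders greater than 43 through a counting argument, so the only remaining work is to handle the odd integers in the range $13 \le |G| \le 43$. For each abelian group $G$ of such order, it suffices by Lemma~\ref{condition2} to produce a triangle $T=\{t_B,t_Y,t_R\}$ in the graph $\Gamma$ of Figure~\ref{graf} together with a good function $f$ satisfying conditions (i)--(iii). Since a good function on $\Gamma$ is freely parametrized by the six values $h_1,\dots,h_6 \in G$ (as established just before Theorem~\ref{43}), the task is purely to specify these six group elements.

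The key steps I would carry out are as follows. First, I would fix the triangle $T$ (say the left upper triangle used in the running example) and write out the three defining linear forms for conditions (i), (ii), (iii) explicitly in terms of $h_1,\dots,h_6$, using the edge-labelling rules from Figure~\ref{graf2}; by the redundancy analysis in the proof of Theorem~\ref{43}, only finitely many inequations ($1$ of type (i), $15$ of type (ii), $27$ of type (iii)) need to be checked. Second, for each cyclic group $\ZZ_n$ with $n$ odd and $13 \le n \le 43$, I would select a candidate 6-tuple $(h_1,\dots,h_6)$ and verify directly that none of the relevant linear forms vanishes modulo $n$. Third, for the non-cyclic abelian groups of odd order in this range (for instance $\ZZ_3 \times \ZZ_9$, $\ZZ_3 \times \ZZ_3 \times \ZZ_3$, $\ZZ_3 \times \ZZ_{11}$, and similar products), I would either embed a suitable cyclic subgroup generator into the tuple or exhibit a tailored 6-tuple adapted to the group's structure, again checking the finitely many inequations. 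In all cases, once the inequations hold, Lemma~\ref{condition2} immediately yields that $x(f)/2$ cannot be written as a sum of $|V|/2$ vertices of $P_{G,3}$, while $x(f)/2 \in L_{G,3}$ by Lemma~\ref{even-lattice}, so $P_{G,3}$ is not normal.

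The main obstacle I anticipate is not any single deep idea but the bookkeeping of ensuring that a uniform or nearly uniform choice of $(h_1,\dots,h_6)$ works across all the small orders simultaneously, since for each such $n$ the coefficients $l_i$ of the linear forms interact with the divisibility properties of $n$. In particular, one must be careful that the factor $1/2$ appearing in the edge formulas (well-defined because $|G|$ is odd) does not accidentally force one of the forbidden sums to vanish for some specific $n$; a single tuple may fail for a handful of orders and require a case split. I would handle this by choosing the $h_i$ to be small, generic integers reduced modulo $n$, and then verifying the finitely many inequations — ideally by a short computation that can be recorded or automated — for each of the finitely many remaining orders, thereby completing the proof.
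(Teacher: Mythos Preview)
Your proposal is correct and follows essentially the same approach as the paper: invoke Theorem~\ref{43} for $|G|>43$ and then exhibit explicit tuples $(h_1,\dots,h_6)$ for the finitely many remaining odd-order groups, verifying the inequations of Lemma~\ref{condition2}. The paper's only refinement is to first choose a single integer-valued good function whose reduction modulo $n$ satisfies the stronger Lemma~\ref{condition} for every odd $n>21$ simultaneously, so that only eight groups ($\ZZ_{13},\ZZ_{15},\ZZ_{17},\ZZ_{19},\ZZ_{21},\ZZ_5^2,\ZZ_3^3,\ZZ_9\times\ZZ_3$) require individual treatment; incidentally, your example $\ZZ_3\times\ZZ_{11}$ is in fact cyclic.
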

\begin{proof}
If $|G|>43$, then $P_{G,3}$ is not normal, due to Proposition~\ref{43}. Here we provide an example of a good function $f$ for smaller groups $G$. Let $h_1=7$, $h_2=7$, $h_3=-4$, $h_4=-6$, $h_5=0$, $h_6=7$. This uniquely determines the good function $f:E\rightarrow \ZZ$ as in Figure~\ref{graf3}:

\begin{figure}[h!]
  \includegraphics[width=\linewidth, scale=0.5, width=6cm]{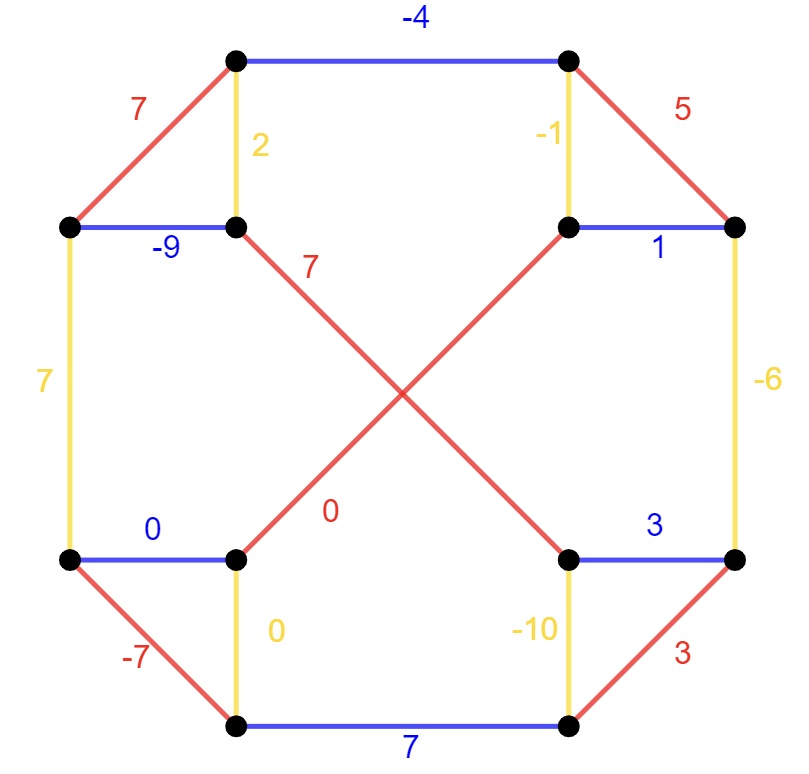}
  \caption{}
  \label{graf3}
\end{figure}

One can easily check that all sums $f(e_1)+f(e_2)+f(e_3)$ for edges of different colors are in the interval $[-26,21]$. Moreover, with the help of a computer, it is possible to check that none of these sums are equal to $23$ or $25$ and the only sums which are equal to $0$ are those for which $e_1,e_2,e_3$ have a common vertex.
This means that the function $f_n:E\rightarrow\ZZ_n$ which is a composition of $f$ with a natural map $i:\ZZ\rightarrow\ZZ_n$ satisfies the condition from Lemma \ref{condition} for all odd $n\ge 21$. By Lemma~\ref{condition}, the polytope $P_{G,3}$ is not normal when $G=\ZZ_n$ for any odd natural number $n>21$. 

We are left just with a few abelian groups, namely with  
$$G\in \{\ZZ_{13},\ZZ_{15},\ZZ_{17},\ZZ_{19},\ZZ_{21},\ZZ_5^2,\ZZ_3^3,\ZZ_9\times\ZZ_3 \}.$$

For each of them, we will provide a separate example of a good function $f$ that satisfies the condition from Lemma \ref{condition2}. The examples provided here by us are not unique, and one can check by a computer that they satisfy the required conditions. For each example we will provide just the corresponding elements $h_1,\dots,h_6$ which uniquely determined the good function $f$:

\begin{description}

\item[$G=\ZZ_{19}$] $h_1=1$, $h_2=1$, $h_3=7$, $h_4=15$, $h_5=0$, $h_6=1$.
\item[$G=\ZZ_{21}$] $h_1=3$, $h_2=3$, $h_3=1$, $h_4=9$, $h_5=0$, $h_6=3$.
\item[$G=\ZZ_{5}^2$] $h_1=(1,2)$, $h_2=(0,1)$, $h_3=(1,1)$, $h_4=(2,2)$, $h_5=(0,0)$, $h_6=(1,4)$.
\item[$G=\ZZ_{9}\times\ZZ_3$] $h_1=(5,2)$, $h_2=(2,2)$, $h_3=(3,1)$, $h_4=(1,0)$, $h_5=(0,0)$, $h_6=(5,2)$.
\end{description}

These four examples satisfy the (stronger) condition from Lemma~\ref{condition}.

\begin{description}
\item[$G=\ZZ_{3}^3$] $h_1=(1,1,0)$, $h_2=(0,0,1)$, $h_3=(0,2,2)$, $h_4=(2,2,0)$, $h_5=(0,0,0)$, $h_6=(1,1,0)$.

In this case, there is one additional triple of edges of different colors (except those that share a vertex) whose sum is equal to 0, namely, $h_4+h_5+h_6=0$. Still, this satisfies the conditions from Lemma~\ref{condition2} since none of these edges is contained in the upper left triangle $T$.
\item[$G=\ZZ_{17}$] $h_1=3$, $h_2=2$, $h_3=2$, $h_4=6$, $h_5=0$, $h_6=3$.

In this case, there is one additional triple of edges of different colors whose sum is equal to 0, namely, $(h_1-h_5-h_6)/2+h_2+(-h_4-h_5+h_3)/2=0$. Still, this satisfies the conditions from Lemma~\ref{condition2} since none of these edges is contained in the upper left triangle $T$.
\item[$G=\ZZ_{15}$] $h_1=3$, $h_2=10$, $h_3=1$, $h_4=1$, $h_5=0$, $h_6=8$.

In this case, there are two additional triples of edges of different colors whose sum is equal to 0. Namely, $(h_1-h_5-h_6)/2+h_2+(-h_4-h_5+h_3)/2=(h_4-h_2-h_6)/2+h_5+h_3=0$. Still, this satisfies the conditions from Lemma~\ref{condition2} since none of these edges is contained in the upper left triangle $T$.

\item[$G=\ZZ_{13}$] $h_1=1$, $h_2=1$, $h_3=1$, $h_4=9$, $h_5=0$, $h_6=1$.

In this case, $h_3=h_6$ creates 4 additional triples of edges of different colors whose sum is equal to 0. Still, each of them contains either 0 or two edges from the upper left triangle $T$, so this also satisfies conditions from Lemma~\ref{condition2}.

\end{description}
\end{proof}

We consider now the abelian group $G=\ZZ_{11}$. For this group, we have not found a good function that satisfies conditions of Lemma~\ref{condition2} on the graph $\Gamma$ which was used in the previous examples. We have not checked all good functions so it is possible that such a function exists, even though we strongly suspect that it does not. However, we manage to find a larger graph $\Gamma_2$ with a good function, that satisfies the properties from Lemma~\ref{condition2}.

\vspace{.1in}
\begin{Theorem}\label{11}
 The polytope $P_{\ZZ_{11},3}$ is not normal.   
\end{Theorem}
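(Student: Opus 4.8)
The plan is to mimic the strategy of Theorem~\ref{odd11}, but on a larger graph $\Gamma_2$ where there is more room to satisfy the conditions of Lemma~\ref{condition2}. For the group $\ZZ_{11}$, the original 12-vertex graph $\Gamma$ apparently does not admit a good function meeting those conditions, so the key idea is to enlarge the combinatorial structure: I would take a cubic, 3-colorable graph $\Gamma_2$ on more vertices (still not bipartite, and still containing at least one monochromatic triangle $T=\{t_B,t_Y,t_R\}$), and exhibit an explicit good function $f\colon E\to\ZZ_{11}$ on it. The parametrization developed before Theorem~\ref{43} carries over: a good function on a graph of this type is determined by the values on the edges that lie in no triangle, with the triangle edges recovered via the half-sum formulas $f(e_B)=\tfrac12(-h_1+h_2+h_3)$ and its analogues, which are well-defined because $11$ is odd.

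The main work is then to verify conditions (i), (ii), and (iii) of Lemma~\ref{condition2} for the chosen $f$. Concretely, I would first fix the values on the non-triangle edges of $\Gamma_2$, propagate them to compute $f$ on every edge, and then check: (i) that the distinguished triangle $T$ fails to be a vanishing triple, $f(t_B)+f(t_Y)+f(t_R)\neq 0$; (ii) that no edge of $T$ shares its $f$-value with any other edge of the same color; and (iii) that no edge of $T$ together with two edges outside $T$ of the other two colors sums to $0$. As in the proof of Theorem~\ref{43}, conditions of type (iii) in which the triangle edge is adjacent to one of the other two edges are redundant, being equivalent to conditions of type (ii); so in practice only the non-adjacent triples need to be inspected. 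Once $f$ passes all three tests, Lemma~\ref{condition2} immediately yields that $x(f)/2$ cannot be written as a sum of $|V|/2$ vertices of $P_{\ZZ_{11},3}$, while Lemma~\ref{even-lattice} guarantees $x(f)/2\in L_{\ZZ_{11},3}$, and since $x(f)\in |V|(P_{\ZZ_{11},3})$ we get $x(f)/2\in \tfrac{|V|}{2}\cdot 2\,(P_{\ZZ_{11},3}\cap L_{\ZZ_{11},3})$ but $x(f)/2\notin \tfrac{|V|}{2}(P_{\ZZ_{11},3}\cap L_{\ZZ_{11},3})$, which is exactly a witness to non-normality with $m=2$.

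The hard part will be finding the right graph $\Gamma_2$ and the right assignment of edge-values in the small group $\ZZ_{11}$: because $|G|=11$ is so small, the pigeonhole pressure on conditions (ii) and (iii) is severe, and a random assignment is likely to produce an accidental vanishing triple or a monochromatic coincidence involving a triangle edge. The advantage of enlarging the graph is that extra non-triangle edges give extra free parameters $h_i$, loosening these constraints; the cost is that there are correspondingly more triples to check, so the verification is best delegated to a computer. I expect the cleanest presentation is to display $\Gamma_2$ together with the explicit values of $f$ on all edges in a figure, remark that the only vanishing triples of differently-colored edges are the ones forced by a common vertex (or, if a few accidental vanishing triples survive, to note — as in the $\ZZ_{13}$, $\ZZ_{15}$, $\ZZ_{17}$ cases above — that none of them involves the distinguished triangle $T$ in a way that violates (i) or (iii)), and then invoke Lemma~\ref{condition2} to conclude that $P_{\ZZ_{11},3}$ is not normal.
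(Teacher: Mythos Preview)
Your plan is exactly the approach the paper takes: it exhibits an explicit larger graph $\Gamma_2$ (on 16 vertices) together with a concrete good function $f\colon E_2\to\ZZ_{11}$, singles out a triangle $T$ with edge-values $2,3,8$, and then simply verifies the three conditions of Lemma~\ref{condition2}. One terminological slip: the triangle $T$ is not ``monochromatic'' --- it has one edge of each color, which is precisely what Lemma~\ref{condition2} requires; also, non-bipartiteness is automatic once a triangle is present, so you need not list it as a separate hypothesis. The only genuine gap in your write-up is that it remains a plan: the actual graph and edge-labelling are never produced, and that data is the entire content of the proof --- without it nothing has been shown.
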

\begin{proof}
Consider the graph $\Gamma_2=(V_2,E_2)$ and the good function $f:E_2\rightarrow \ZZ_{11}$ displayed in Figure~\ref{graf11}:

\begin{figure}[H]
  \includegraphics[width=\linewidth, scale=0.5, width=8.5cm]{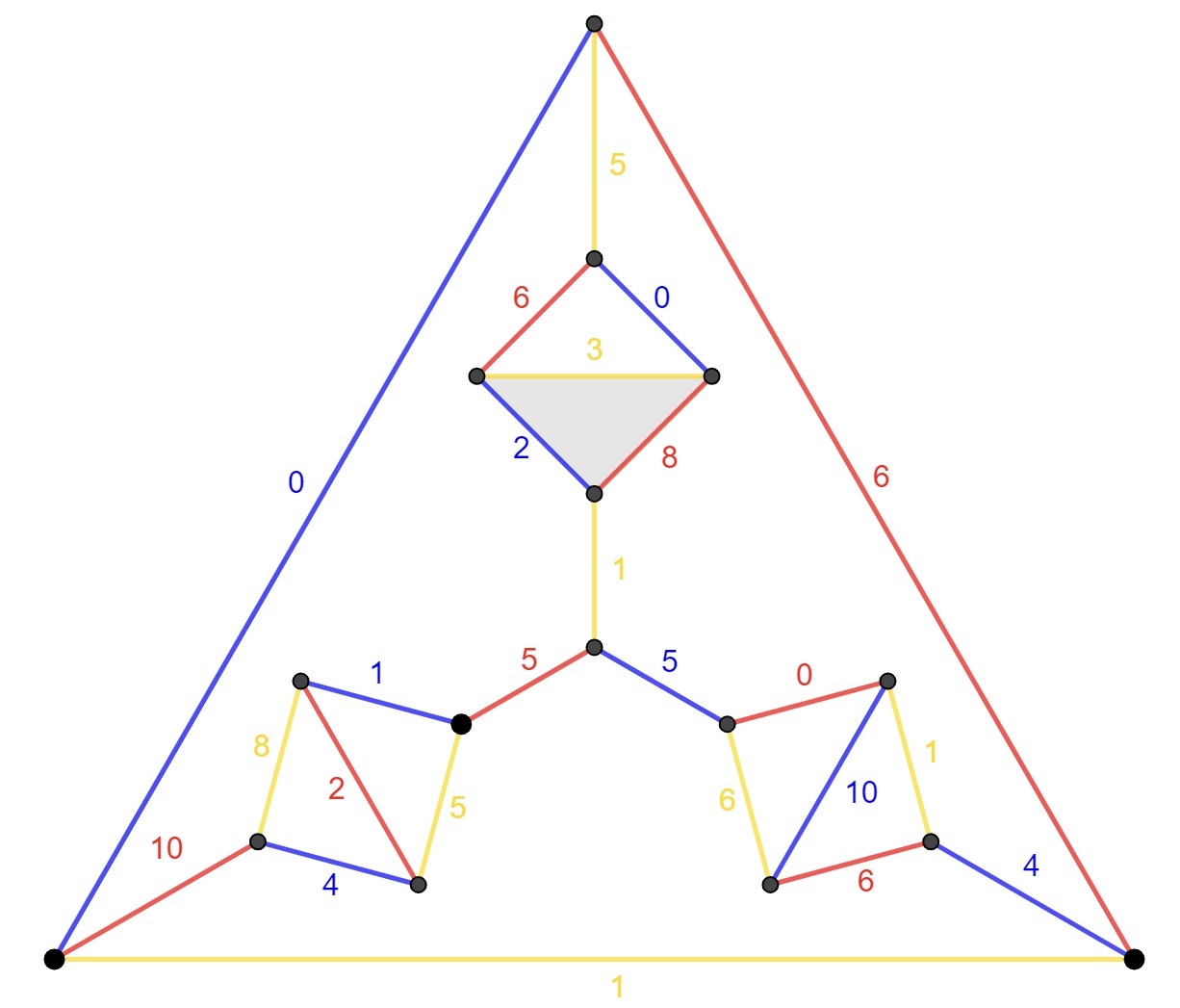}
  \caption{}
  \label{graf11}
\end{figure}

The triangle $T$ is the gray triangle from the picture, i.e. edges with values $2,3,8$.
Again, one can check by computer, or even by hand, that this satisfies the conditions of Lemma \ref{condition2} and, therefore, $P_{\ZZ_{11},3}$ is not normal.
\end{proof}

\begin{Remark}
The non-decomposable point from the last proof is the point $$x(\{0,0,1,2,4,4,5,10\},\{1,1,1,3,5,5,6,8\},\{0,2,5,6,6,6,8,10\}).$$ It is not difficult to find all triples of elements $(h_1,h_2,h_3)$, where $h_i$ is from the $i$-th multiset, that satisfy $$h_1+h_2+h_3=0 \wedge (h_1=2 \vee h_2=3 \vee h_3=8).$$
Indeed, the only such triples are $(2,3,6), (2,1,8), (0,3,8)$ which immediately shows that this point is non-decomposable. Despite the fact, we derive this example from the graph, this demonstrates what is happening just in terms of elements of $\ZZ_{11}$.
\end{Remark}

\section{Computational results}\label{comp}

In this section, we present a computational way to check if the polytopes associated to a tripod and the groups $\ZZ_5$ and $\ZZ_7$ are normal. In the next computational result, we give a positive answer to this question:

\begin{Computation}\label{5and7} The polytope $P_{G,3}$ associated to the tripod and any of the groups $G~\in~\{\ZZ_5, \ZZ_7\}$ is normal. Hence, the algebraic varieties representing these models are projectively normal.
\end{Computation}

Here we present the computational method we used. For obtaining the vertices of the polytopes $P_{G,3}$ where $G\in \{\ZZ_5, \ZZ_7\}$, we use the following code in Macaulay2~\cite{M2}, making use of the package ``Phylogenetic Trees". 

\begin{verbatim}
loadPackage "PhylogeneticTrees" 
n=7;
g=1_(ZZ/n);
G=for i from 0 to n-1 list i*g;
B=for i from 0 to n-1 list {G#i};
M=model(G,B,{});
T=leafTree(3,{});
A=submatrix'(phyloToricAMatrix(T,M),{0,n,2*n},);
I=id_(QQ^(n-1));
PP=for i from 1 to n-1 list n-i;
TM=inverse((I|(-1)*I|0*I)||(I|0*I|I)||((transpose(matrix{PP})+
  submatrix(I,,{0}))|submatrix'(I,,{0})|0*I|0*I));
LL=for i from 0 to n^2-1 list 1;
AA=(transpose(matrix{LL}))|transpose(A)*TM;
entries(AA)
\end{verbatim}

Note that we want to check the normality of the polytope in the lattice spanned by its vertices and not in the lattice $\ZZ^{3(|G|-1)}$. Thus, we write the coordinates of vertices of $P_{G,3}$ in the basis of $L_{G,3}$,
hence this requires the last part of the presented code. More precisely, we use the following basis for the lattice $L_{\ZZ_n,3}$:
\begin{itemize}
\item $x(\{g\},\emptyset,\emptyset)-x(\emptyset,\{g\},\emptyset)$ for all $g\in \ZZ_n$
\item $x(\{g\},\emptyset,\emptyset)-x(\emptyset,\emptyset,\{g\})$ for all $g\in \ZZ_n$
\item $n\cdot x(\{1\},\emptyset,\emptyset)$
\item $x(\{\underbrace{1,1,\dots,1}_{(n-i)\text{-times}},i\},\emptyset,\emptyset)$ for all $2\le i\le n-1$.
\end{itemize}

After obtaining the vertices of the polytope, we use Polymake~\cite{polymake} in order to check its normality.

We also use a similar computational approach to check the normality for groups $\ZZ_9$ and $\ZZ_3^2$. However, this time the computation resulted in a negative answer:

\begin{Computation}\label{9and3x3} The polytope $P_{G,3}$ associated to the tripod and any of the groups $G~\in~\{\ZZ_9, \ZZ_3^2\}$ is not normal. Hence, the algebraic varieties representing these models are not projectively normal.
\end{Computation}

For the group $\ZZ_3\times \ZZ_3$ we use the following code in Macaulay2 for obtaining the vertices, and, again, for checking normality we use Polymake.

\begin{verbatim}
loadPackage "PhylogeneticTrees" 
g1={1_(ZZ/3),0_(ZZ/3)};
g2={0_(ZZ/3),1_(ZZ/3)};
G={0*g1,g1,2*g1,g2,g1+g2,2*g1+g2,2*g2,g1+2*g2,2*g1+2*g2};
B=for i from 0 to 8 list {G#i};
M=model(G,B,{});
T=leafTree(3,{});
A=phyloToricAMatrix(T,M);
AAA=submatrix'(A,{0,8,16},);
I=id_(QQ^8);
PP=matrix{{3,0,0,0,0,0,0,0},{0,0,3,0,0,0,0,0},{1,1,0,0,0,0,0,0},
{1,0,1,-1,0,0,0,0},{2,0,1,0,-1,0,0,0},{0,0,1,0,0,1,0,0}, 
{1,0,2,0,0,0,-1,0}, {2,0,2,0,0,0,0,-1}};
TM=inverse((I|(-1)*I|0*I)||(I|0*I|I)||(PP|0*I|0*I));
ATM=transpose(AAA)*TM;
LL=for i from 0 to 80 list 1;
AA=(transpose(matrix{LL}))|ATM;
L=entries(AA)
\end{verbatim}

For the group $\ZZ_9$ one can use the program \cite{marysia} for getting the vertices of the polytope $P_{\ZZ_9, 3}$, and then we proceed as for the previous groups.

\section{Classification}\label{class}

In this section, we present the main result. Namely, we provide a complete classification of normal group-based phylogenetic varieties for tripods, and more generally, for trivalent trees.

\begin{Theorem}\label{main}
Let $G$ be an abelian group. Then the polytope $P_{G,3}$ associated to a tripod and the group $G$ is normal if and only if $G\in\{\ZZ_2, \ZZ_3, \ZZ_2\times \ZZ_2, \ZZ_4, \ZZ_5, \ZZ_7\}$.
\end{Theorem}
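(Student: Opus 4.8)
The plan is to prove the statement as a synthesis of the results established in the previous sections, organized by a case analysis on the order $|G|$. Throughout I take $G$ to be a finite nontrivial abelian group, this being the standing assumption for a group-based model (the state space must have at least two elements; the trivial group gives a single-point polytope and is excluded). The two implications are treated separately. For the \emph{if} direction I collect, for each of the six listed groups, an existing normality result. For the \emph{only if} direction I first observe that every abelian group whose order lies in $\{2,3,4,5,7\}$ is already on the list: the orders $2,3,5,7$ are prime, so the group is forced to be cyclic, and the only abelian groups of order $4$ are $\ZZ_4$ and $\ZZ_2\times\ZZ_2$, both listed. Hence a nontrivial group \emph{not} on the list is exactly one with $|G|\ge 6$, and it suffices to prove non-normality for all such orders.

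For the \emph{if} direction I would simply cite the established normality of each listed group: $\ZZ_2$ by \cite{bw}, $\ZZ_2\times\ZZ_2$ by \cite{martinko}, and $\ZZ_4$ by \cite{mateusz}. Normality of $\ZZ_5$ and $\ZZ_7$ is Computation~\ref{5and7}, and normality of $\ZZ_3$ follows from the same computational procedure (the code of Computation~\ref{5and7} run with $n=3$, followed by the Polymake normality check). I note that the cited references in fact establish normality for all trivalent (indeed all) trees, so in particular for the tripod polytope $P_{G,3}$.

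For the \emph{only if} direction, let $G$ be a nontrivial abelian group with $|G|\notin\{2,3,4,5,7\}$, so that $|G|\ge 6$, and I split into the following exhaustive cases. If $|G|$ is even then $|G|\ge 6$ and $P_{G,3}$ is non-normal by \cite[Proposition 2.1]{RM}. If $|G|$ is odd then $|G|\ge 9$, and I subdivide: for $|G|\ge 13$ non-normality is Theorem~\ref{odd11}; the case $|G|=11$ forces $G=\ZZ_{11}$, handled by Theorem~\ref{11}; and the case $|G|=9$ forces $G\in\{\ZZ_9,\ZZ_3\times\ZZ_3\}$, both handled by Computation~\ref{9and3x3}. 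Since the odd integers at least $9$ are exactly $9,11$ and those $\ge 13$, and every even integer $\ge 6$ is covered, this exhausts all orders $\ge 6$ and completes the argument.

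The main obstacle is not the assembly itself, which is routine once the ingredients are in place, but guaranteeing that the case split is genuinely exhaustive precisely at the boundary of the classification. The delicate values are the smallest non-normal odd orders $9$ and $11$: they lie just below the threshold $|G|>11$ of Theorem~\ref{odd11} and well below the counting bound $|G|>43$ of Theorem~\ref{43}, so they cannot be obtained from the generic existence argument and instead require the sharper criterion of Lemma~\ref{condition2} together with explicit good functions and, for $\ZZ_{11}$, the enlarged graph $\Gamma_2$. Finally, I would remark that the corresponding statement for an arbitrary trivalent tree $\mathcal{T}$ follows at once: by Theorem~\ref{key} the polytope $P_{G,\mathcal{T}}$ is a toric fiber product of copies of $P_{G,3}$, so \cite[Lemma 5.1]{mateusz} transfers normality from the tripod to $\mathcal{T}$, while conversely non-normality of $P_{G,3}$ forces non-normality of $P_{G,\mathcal{T}}$.
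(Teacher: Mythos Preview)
Your proposal is correct and follows essentially the same case-by-case synthesis as the paper's proof. The only difference worth noting is the treatment of $\ZZ_3$: the paper invokes \cite[Theorem 2.3]{RM}, an existing published result, whereas you propose rerunning the code of Computation~\ref{5and7} with $n=3$; both are valid, but citing the established reference avoids introducing an extra (unreported) computation.
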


\begin{proof}
By Theorem~\ref{43}, $P_{G,3}$ is not normal, for any abelian group $G$ which has odd cardinality greater than 43. The same is true, by Theorem~\ref{odd11}, the same is true, for any abelian group $G$ which has odd cardinality between 12 and 43, and, by Theorem~\ref{11}, if $|G|=11$. Now, by \cite[Proposition 2.1]{RM}, $P_{G,3}$ is not normal, for any abelian group of even cardinality greater or equal than 6. 
The polytopes $P_{G,3}$ are normal when $G\in \{\ZZ_5, \ZZ_7\}$, by Computation~\ref{5and7}, and non-normal when $G\in \{\ZZ_9, \ZZ_3\times \ZZ_3\}$, by Computation~\ref{9and3x3}. If $G=\ZZ_4$, the polytope $P_{G,3}$ is normal by computations shown in \cite{mateusz}. When $G=\ZZ_2\times \ZZ_2$, the polytope corresponding to the 3-Kimura model is normal by \cite{martinko}, even for any tree.  If $G=\ZZ_3$, the corresponding polytope is normal by \cite[Theorem 2.3]{RM}, for any tree. When $G=\ZZ_2$, the polytope corresponding to the Cavender-Farris-Neyman is normal, by \cite{bw} and \cite{martinko}, for any tree.
    
\end{proof}

As a consequence of Theorem~\ref{main}, and Sullivant's result Theorem~\ref{key}, we obtain that:

\begin{Corollary}
Let $G$ be an abelian group. Then the polytope $P_{G,\mathcal{T}}$ associated to any trivalent tree and the group $G$ is normal if and only if $G\in\{\ZZ_2, \ZZ_3, \ZZ_2\times \ZZ_2, \ZZ_4, \ZZ_5, \ZZ_7\}$.
\end{Corollary}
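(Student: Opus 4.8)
The plan is to derive the corollary directly from the tripod classification Theorem~\ref{main} together with Sullivant's fiber-product decomposition Theorem~\ref{key}, treating the two implications of the biconditional separately. In both directions the combinatorial content for a single tripod is already in hand; what remains is to transport normality (respectively non-normality) along the gluing of tripods that assembles an arbitrary trivalent tree.

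For the ``if'' direction, suppose $G\in\{\ZZ_2,\ZZ_3,\ZZ_2\times\ZZ_2,\ZZ_4,\ZZ_5,\ZZ_7\}$. By Theorem~\ref{main} the tripod polytope $P_{G,3}$ is normal. By Theorem~\ref{key}, for any trivalent tree $\mathcal{T}$ the polytope $P_{G,\mathcal{T}}$ is obtained as an iterated toric fiber product of copies of $P_{G,3}$. I would then invoke \cite[Lemma 5.1]{mateusz}, which guarantees that the toric fiber product of normal polytopes arising in this phylogenetic setting is again normal; applying it inductively over the internal edges of $\mathcal{T}$ yields normality of $P_{G,\mathcal{T}}$. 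This is the substantive half of the argument, and it is exactly the reduction already announced in the introduction.

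For the ``only if'' direction, suppose $G\notin\{\ZZ_2,\ZZ_3,\ZZ_2\times\ZZ_2,\ZZ_4,\ZZ_5,\ZZ_7\}$. Then Theorem~\ref{main} gives that $P_{G,3}$ is not normal. Every trivalent tree $\mathcal{T}$ possessing an internal vertex of degree $3$ is in particular not a path, so \cite[Remark 2.2]{RM} applies and the non-normality of the tripod polytope forces non-normality of $P_{G,\mathcal{T}}$. I would note that this covers all the trees in the statement, since a genuine trivalent tree always carries such a degree-$3$ vertex and is therefore non-trivial in the sense of that remark.

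The main obstacle lies not in the bookkeeping but in the preservation of normality under the fiber product, i.e.\ in the ``if'' direction: one must check that the hypotheses of \cite[Lemma 5.1]{mateusz} genuinely hold for the factors $P_{G,3}$ at every gluing step (identical multigrading on the shared edge, and compatibility of the lattices $L_{G,3}$), so that normality is not destroyed when two tripods are glued. Once this is verified, the induction over the internal edges of $\mathcal{T}$ is routine, and the ``only if'' direction is then immediate from the non-normality of the tripod established in Theorem~\ref{main} together with the propagation statement \cite[Remark 2.2]{RM}.
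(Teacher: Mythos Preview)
Your proposal is correct and follows exactly the paper's route: the paper proves the corollary in one line by invoking Theorem~\ref{main} together with Theorem~\ref{key}, and its introduction already names \cite[Lemma~5.1]{mateusz} and \cite[Remark~2.2]{RM} as the tools that transport normality and non-normality along the tripod gluings. The only loose end is your assertion that every trivalent tree carries a degree-$3$ vertex; under the paper's stated definition (each node has degree~$\le 3$) a path would qualify, and for a path the polytope is normal for every $G$, so the ``only if'' direction tacitly excludes this degenerate case --- but the paper glosses over it as well.
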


Therefore, in terms of the associated toric varieties, we get:

\begin{Corollary}
Let $G$ be an abelian group. Then the group-based phylogenetic variety $X_{G,3}$ associated to a tripod and the group $G$ is projectively normal if and only if $G\in\{\ZZ_2, \ZZ_3, \ZZ_2\times \ZZ_2, \ZZ_4, \ZZ_5, \ZZ_7\}$.
Moreover, this result holds for the group-based phylogenetic variety $X_{G,\mathcal{T}}$ associated to any trivalent tree.
\end{Corollary}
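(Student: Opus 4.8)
The plan is to obtain this statement from the polytope-level classification by invoking the standard dictionary of toric geometry. First I would recall the foundational fact, already noted in the introduction and available in \cite{cox, sturmfels}, that for a lattice polytope $P$ the associated projective toric variety is projectively normal exactly when $P$ is a normal polytope relative to the lattice generated by its vertices. The point to pin down is that this dictionary applies to our situation with the correct lattice: the Discrete Fourier Transform of Hendy--Penny \cite{HP} and Erd\H{o}s--Steel--Sz\'ekely \cite{sz} turns the parametrization $\Theta$ into a monomial map whose exponent vectors are precisely the lattice points of $P_{G,T}$ inside $L_{G,T}$, so that $X_{G,T}$ is the projective toric variety defined by $P_{G,T}$ in the lattice $L_{G,T}$ described in Section~\ref{prel}. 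Consequently, projective normality of $X_{G,T}$ is equivalent to normality of $P_{G,T}$.

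With this equivalence in hand, the tripod case is immediate. Theorem~\ref{main} asserts that $P_{G,3}$ is normal if and only if $G\in\{\ZZ_2, \ZZ_3, \ZZ_2\times \ZZ_2, \ZZ_4, \ZZ_5, \ZZ_7\}$; transporting this through the dictionary gives that $X_{G,3}$ is projectively normal under exactly the same condition on $G$. For the \emph{moreover} clause I would appeal to the preceding Corollary, which already extends the classification from the tripod to an arbitrary trivalent tree $\mathcal{T}$ at the polytope level (using Sullivant's fiber-product description, Theorem~\ref{key}, together with \cite[Lemma 5.1]{mateusz} for the normality direction and the non-normality transfer recorded in the introduction). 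Applying the same equivalence to the pair $P_{G,\mathcal{T}}$, $X_{G,\mathcal{T}}$ then yields the trivalent-tree statement verbatim.

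Since every ingredient is already established, I do not anticipate a genuine mathematical obstacle; the argument is purely translational. The only step demanding care is the lattice bookkeeping: normality of a polytope is meaningful only relative to a fixed lattice, and one must verify that the lattice in which $X_{G,T}$ is embedded coincides with $L_{G,T}$ rather than the ambient $\ZZ^{3|G|}$. Once this identification is confirmed, the equivalence between a normal polytope and a projectively normal toric variety applies without extra hypotheses, and the Corollary drops out of Theorem~\ref{main} and the trivalent-tree Corollary.
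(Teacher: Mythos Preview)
Your proposal is correct and matches the paper's approach: the paper simply states this Corollary as an immediate consequence of Theorem~\ref{main} and the preceding trivalent-tree Corollary, leaving the polytope-to-variety translation implicit. You have written out that translation more carefully (including the observation about working in the lattice $L_{G,T}$ rather than the ambient $\ZZ^{3|G|}$), but the underlying argument is identical.
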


\begin{Remark} Let $G$ be an abelian group.
As the non-normality of polytopes associated to tripods implies the non-normality of polytopes associated to any tree, the groups from Theorem~\ref{main} are the only candidates to give rise to projectively normal toric varieties associated to an arbitrary tree. In addition, for $\ZZ_2$, $\ZZ_2\times \ZZ_2$ and $\ZZ_3$, the corresponding phylogenetic are known to be normal for any tree, hence, it remains to understand only the normality when the groups are $\ZZ_4, \ZZ_5$ and $\ZZ_7$ for any tree. 
\end{Remark}

We also used the computer to check the normality for $4$-claw tree and the above groups, and it turns out that the polytopes are normal. We suspect that these groups give rise to projectively normal phylogenetic varieties for any tree and we propose the following:

\begin{Conjecture}
Let $G$ be an abelian group and $T$ a tree. Then the group-based phylogenetic variety $X_{G,T}$ is projectively normal if and only if $G\in\{\ZZ_2, \ZZ_3, \ZZ_2\times \ZZ_2, \ZZ_4, \ZZ_5, \ZZ_7\}$.
\end{Conjecture}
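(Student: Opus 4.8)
The plan is to prove the theorem by a complete case analysis over all finite abelian groups $G$, organized according to the parity and the magnitude of $|G|$, invoking in each case the appropriate result established earlier in the paper (or in the cited literature). Because the cases form a partition whose union is the class of all finite abelian groups (the trivial group being degenerate), a single assembly yields both implications of the equivalence at once: the six groups in the list fall into the \emph{normal} cases, and every other abelian group falls into a \emph{non-normal} case.

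First I would settle the non-normal direction, which must account for every group outside the list. I split by parity. For even $|G|\ge 6$ non-normality is \cite[Proposition 2.1]{RM}, and this already covers all abelian groups of even order except $\ZZ_2$ and the two groups of order $4$. For odd order I use the graph-and-good-function machinery of Section~\ref{non-normal}: for odd $|G|\ge 13$, Theorem~\ref{odd11} supplies the required good functions, and for $|G|=11$ the larger graph of Theorem~\ref{11} does the same; the remaining odd case $|G|=9$, comprising both $\ZZ_9$ and $\ZZ_3\times\ZZ_3$, is handled by Computation~\ref{9and3x3}. This exhausts every odd order except $3,5,7$.

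Next I would treat the normal direction for the six listed groups. The orders $2,3,5,7$ are prime, so the cyclic group is the only abelian group of each such order and nothing is omitted: $\ZZ_5$ and $\ZZ_7$ are normal by Computation~\ref{5and7}, $\ZZ_3$ by \cite[Theorem 2.3]{RM}, and $\ZZ_2$ by \cite{bw}. The order $4$ has two groups, both normal: $\ZZ_4$ by the computations of \cite{mateusz} and $\ZZ_2\times\ZZ_2$, the $3$-Kimura model, by \cite{martinko}. Collecting these with the non-normal cases above proves the equivalence.

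The genuine obstacle is not any individual computation but ensuring that the case split is \emph{exhaustive over all abelian groups, not merely the cyclic ones}; this completeness is exactly what the earlier theorems were designed to guarantee. The delicate window is that of odd composite orders admitting a non-cyclic group, such as $25$ and $27$: these are absorbed into Theorem~\ref{odd11} via the explicit good functions for $\ZZ_5^2$, $\ZZ_9\times\ZZ_3$ and $\ZZ_3^3$, while the structure-independence of the counting argument in Theorem~\ref{43}, whose bound depends only on $|G|$ and uses that an odd group is $2$-divisible, makes the large-order step uniform across isomorphism types. For the present assembly it therefore suffices to verify that the stated thresholds, namely even order at least $6$, odd order at least $13$, order $11$, and order $9$, together with the small normal orders $2,3,4,5,7$, leave no abelian group unaccounted for. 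Once this bookkeeping is confirmed, every finite abelian group lands in exactly one cited case, and the classification follows.
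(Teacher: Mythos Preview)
The statement you are attempting to prove is labelled a \emph{Conjecture} in the paper, not a theorem, and the paper does not claim to prove it. Your proposal is in fact a correct assembly of the proof of Theorem~\ref{main} (the tripod case), and via Theorem~\ref{key} of the trivalent-tree corollary, but it does \emph{not} establish the Conjecture, which concerns an arbitrary tree $T$.

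The gap is entirely in the normal direction. The non-normal direction is fine: non-normality of $P_{G,3}$ propagates to any non-trivial tree, so your case split there really does cover all $T$. But for the six listed groups your citations do not all reach arbitrary trees. Computation~\ref{5and7} checks normality of $P_{\ZZ_5,3}$ and $P_{\ZZ_7,3}$ for the tripod only, and the computations of \cite{mateusz} for $\ZZ_4$ likewise yield only the trivalent case; the paper itself says explicitly in the remark preceding the Conjecture that ``it remains to understand only the normality when the groups are $\ZZ_4,\ZZ_5$ and $\ZZ_7$ for any tree.'' Concretely, nothing in your argument shows that, say, $P_{\ZZ_7,n}$ is normal for every $n$-claw tree with $n\ge 4$; by Theorem~\ref{key} this is exactly what one would need. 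So the ``delicate window'' you identify---exhaustiveness over isomorphism types of $G$---is not the obstacle here; the genuine obstacle is the unproven normality of $X_{\ZZ_4,T}$, $X_{\ZZ_5,T}$, $X_{\ZZ_7,T}$ for general $T$, which is why the statement remains a conjecture.
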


\section*{Acknowledgement} RD was supported by the Alexander von Humboldt Foundation and by a grant of the Ministry of Research, Innovation and Digitization, CNCS - UEFISCDI, project number PN-III-P1-1.1-TE-2021-1633, within PNCDI III.

MV was supported by Slovak VEGA grant 1/0152/22.

\vspace{.3in}

\end{document}